\documentclass[11pt,a4paper,reqno]{amsart}

\usepackage[left=30mm, right=30mm, bottom=25mm, top=25mm,includefoot]{geometry}

\usepackage{color} 
\usepackage{xcolor}
\usepackage{hyperref}

\usepackage{tikz}
\usepackage{pgf}
\usetikzlibrary{
    shapes.symbols,shapes.geometric, cd,
    shadows,arrows.meta,
    graphs, calc, hobby, decorations.pathreplacing, 
    decorations.markings, matrix, patterns
}
\usepgfmodule{decorations}

\tikzstyle{EDR}=[draw=black,line width=1pt,preaction={clip, postaction={pattern=north west lines, pattern color=black}}]

\setlength{\parskip}{0ex}
\setlength{\parindent}{12pt}

\usepackage{etoolbox}
\makeatletter
\patchcmd\maketitle
  {\@nx\MakeUppercase{\the\toks@}}
  {\the\toks@}
  {}
  {}{}
\patchcmd\@setauthors
  {\MakeUppercase{\authors}}
  {\authors}
  {}{}
\makeatother

\usepackage{array}
\usepackage{float}

\sloppy

\usepackage[tableposition=below]{caption}

\setlength{\belowcaptionskip}{-5pt}

\parskip 1ex

\makeatletter
\newcommand*\PrintSkips[1]{%
  \typeout{In #1:}%
  \typeout{\@spaces above: \the\abovecaptionskip}%
  \typeout{\@spaces below: \the\belowcaptionskip}%
}
\makeatother

\usepackage{amsmath}
\usepackage{amssymb}
\usepackage{extarrows}
\usepackage{braket}
\usepackage{mathtools}
\usepackage{commath}
\usepackage{mathrsfs}

\DeclarePairedDelimiter{\ceil}{\lceil}{\rceil}

\makeatletter
\newsavebox{\@brx}
\newcommand{\llangle}[1][]{\savebox{\@brx}{\(\m@th{#1\langle}\)}%
\mathopen{\copy\@brx\kern-0.5\wd\@brx\usebox{\@brx}}}
\newcommand{\rrangle}[1][]{\savebox{\@brx}{\(\m@th{#1\rangle}\)}%
\mathclose{\copy\@brx\kern-0.5\wd\@brx\usebox{\@brx}}}
\makeatother

\usepackage{amsthm}

\theoremstyle{plain}
\newtheorem{Th}{Theorem}[section]

\newtheorem{Lm}[Th]{Lemma}
\newtheorem*{Lm_orderings}{Lemma~3.2$'$}

\newtheorem{Corollary}[Th]{Corollary}

\theoremstyle{definition}
\newtheorem{Example}[Th]{Example}
\newtheorem{Def}[Th]{Definition}
\newtheorem{Remark}[Th]{Remark}

\makeatletter
\renewenvironment{proof}[1][Proof]{
  \par
  \pushQED{\qed}%
  \normalfont
  \topsep0pt \partopsep3pt
  \trivlist
  \item[\hskip\labelsep
        \itshape
        #1\@addpunct{.}]\ignorespaces
}{
  \popQED\endtrivlist\@endpefalse
  \addvspace{6pt plus 6pt} %
}
\makeatother

\newcommand{\conv}[1]{\mathrm{Conv}\left(#1\right)}
\newcommand{\eps}[0]{\varepsilon}
\newcommand{\al}[0]{\alpha}
\newcommand{\R}[0]{\mathbb{R}}
\DeclareMathOperator{\dist}{dist}

\begin{document}

\title[\resizebox{4in}{!}{Logarithmic algorithms for fair division problems}]{Logarithmic algorithms for fair division problems}

\author[A. Grebennikov]{Alexandr Grebennikov}
\address{A.~Grebennikov: Saint-Petersburg State University, Department of Mathematics and Computer Sciences, 14th Line 29B, Vasilyevsky Island, St.\,Petersburg 199178, Russia}
\email{sagresash@yandex.ru}

\author[X. Isaeva]{Xenia Isaeva}
\address{X.~Isaeva: HSE University, department of mathematics, Usacheva str. 6,  Moscow 119048, Russia}
\email{ksisaeva@edu.hse.ru}

\author[A. V. Malyutin]{Andrei V. Malyutin}
\address{A.\,V.~Malyutin: St.\,Petersburg Department of Steklov Institute of Mathematics, 
Fontanka 27, St.\,Petersburg 191011, Russia}
\email{malyutin@pdmi.ras.ru}

\author[M. Mikhailov]{Mikhail Mikhailov}
\address{M.~Mikhailov: St Petersburg University, department of mathematics and mechanics, University avenue 28, St.\,Petersburg 198504, Russia}
\email{mikhailovmikhaild@yandex}

\author[O. R. Musin]{Oleg R. Musin}
\address{O.\,R.~Musin: University of Texas Rio Grande Valley, One West University Boulevard, Brownsville, TX, 78520} 
\email{oleg.musin@utrgv.edu}

\maketitle




\begin{abstract}
    We study the algorithmic complexity of fair division problems with a focus on minimizing the number of queries needed 
    to find an approximate solution with desired accuracy.
    We show for several classes of fair division problems that under certain natural conditions on sets of preferences, a logarithmic number of queries with respect to accuracy is sufficient.
\end{abstract}

\keywords{Keywords: Computational fair division, envy-free fair division, cake-cutting problem, rental harmony problem, Sperner's lemma, KKM lemma}

\thanks{Mathematics Subject Classification 91B32 68Q17 52C45}

\section{Introduction}
\label{sec:introduction}

The problem of fair division is old and famous, it has many forms and arises in numerous real-world settings. See, e.\,g.,~ \cite{Robertson98} and references therein for an introduction to the subject. 
Among the many interesting aspects of this problem, the area of algorithmic issues stands out.
The state of art here is well characterized by the following words of~\cite{Robertson98}:
``\emph{The main unresolved issue ... is the general question of finding bounded finite algorithms for envy-free division for any number of players. This is a well-known problem, and any algorithm for envy-free division, even for special small cases ... would be of interest}.''

In this paper we investigate fast algorithms for some particular cases of the rental harmony and cake-cutting problems. 
To recall the former, suppose a group of friends consider renting a house but they shall first agree on how to allocate its rooms and share the rent. They will rent the house only if they can find a room assignment-rent division which appeals to each of them. 
We call this problem the {\it rental harmony} problem, following Su~\cite{Su99}. 
Our investigation of the rental harmony problem starts with the simplest situation when each of the tenants forms for their own opinion  of a fair price for each of the rooms, according to the tenant's own criteria, and is ready to rent a room at this or lower price, regardless (say, not knowing) how the rest of the rent is distributed among other tenants.

In the envy-free {\it cake-cutting problem}, a ``cake'' (a heterogeneous divisible resource) has to be divided among $d$ partners with different preferences over parts of the cake \cite{DS,Stn,Strom,Su99}. 
An alternative interpretation of this problem is as several subcontractors distribute parts of some job among themselves.
In particular, to emphasize the duality with the rental harmony problem, suppose a group of workers discussing renovating a house but they shall first agree on who is in charge of which room and allocate the reward.
The simplest version for this problem is when each of the workers forms their own opinion of a fair price for repairing each of the rooms, according to the worker's own criteria, and is ready to renovate a room for this or greater fee.

A convenient visual-geometric point of view 
on the fair division problems is provided by the configuration space approach.
In case of $d$ agents (friends/ tenants/ partners/ subcontractors/ workers) the configuration space of all possible rent/reward allocations is the standard $(d-1)$-dimensional simplex $A=\Delta^{d-1}$, that is, all representations of a positive number as a sum of $d$ nonnegative ones. 
The preference sets of agents are then subsets of this simplex, the conditions of existence of a fair division can be formulated as conditions on these sets, and solution existence theorems are related to extensions of the KKM (Knaster--Kuratowski--Mazurkiewicz) lemma.\footnote{The existence of a solution to envy-free division problems was discussed in \cite{Asada18, Bapat, Frick19, Gale, Mus17, Su99}. Gale~\cite{Gale} proved a colorful (rainbow) version of the KKM lemma that can be applied for the existence theorem proved by Su~\cite{Su99}.}
The mentioned above simplest versions are the cases where these subsets are intersections of the simplex with half-spaces whose boundaries are parallel to the faces of the simplex.
Another case we consider is of convex preference sets.

In terms of configuration spaces, the fair division problems we study are formulated as follows. 
There is a simplex and collections of preference sets (satisfying conditions of solution existence). 
We do not know these sets, but we can ask which of these sets contain any chosen point of the simplex.
We are interested in algorithms for finding (approximate) solutions aimed at minimizing the number of queries.

We consider two types of queries.
One of these two means that we choose a preference set and a point and check whether this set contains this point.
For example, for the rental harmony problem this means that a tenant says if a given room suits them under a given rent distribution.
We call this type of query the \emph{binary mode}.
Another approach, which we refer to as the \emph{minimal mode}, is when a tenant indicates one of the rooms that suits them (under a given rent distribution).

Observe that a minimal mode query can be simulated by $d-1$ binary mode queries.\footnote{This is because if none of $d-1$ preference sets of a person covers a point in the configuration space, then without running the $d$th query we know that the point is covered by the person's $d$th preference set.} This implies that any algorithm that finds an approximate solution in minimal mode using at most $N$ queries is obviously modified into an algorithm that finds such a solution in binary mode using at most $(d-1)N$ queries. In non-degenerate cases, no minimal mode queries can completely recover the information obtained by binary mode queries (to see this, consider examples where each of the preference sets covers almost the entire configuration space).

Another important issue is that we are not looking for a point close to a solution, but a point that is a solution when all preference sets increase by a prescribed $\eps$ (in a natural metric on the simplex). 
Such a point is called an \emph{$\eps$-fair division point}.
Note that an $\eps$-fair division point may be located more than $\eps$ away from solutions. 
The following figure shows an example of a three-tenant rental harmony problem (with each tenant having the same triplet of convex preference sets), where an $0.1$-fair division point $E$ is more than $0.5$ away from the (unique) solution point~$S$.

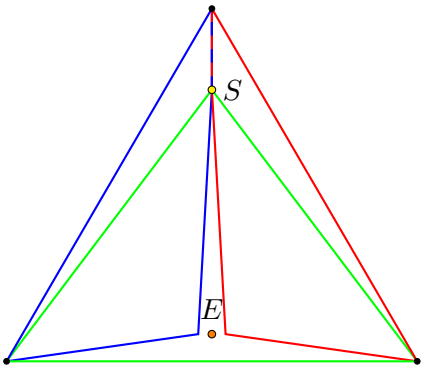
\begin{figure}[h!]
    \centering   
            \begin{minipage}[h]{0.3\linewidth}\centering\begin{tikzpicture}
        [
        scale=1.8,
        >=stealth,
        point/.style = {draw, circle,  fill = black, inner sep = 1pt},
        dot/.style   = {draw, circle,  fill = black, inner sep = .75pt},
        use Hobby shortcut
        ]
        
        \def\xang{0.500}
        \def\yang{0.866}
        \coordinate (ptA) at (0, 0);
        \coordinate (ptB) at (3, 0);
        \coordinate (ptC) at (3*\xang, 3*\yang);
        
        \coordinate (CCC) at (1.5, 2);
        \coordinate (LL) at (1.4, 0.2);
        \coordinate (RR) at (1.6, 0.2);        
        
        \draw[blue, thick]  (ptA) -- (LL) -- (CCC) -- (ptC);
        \draw[red, thick]   (ptB) -- (RR) -- (CCC) -- (ptC);
        \draw[green, thick] (ptA) -- (CCC) -- (ptB); 
        \draw[blue, thick]  (CCC) -- (1.5, 2.1);
        \draw[blue, thick]  (1.5, 2.2) -- (1.5, 2.3);
        \draw[blue, thick]  (1.5, 2.4) -- (1.5, 2.5);
        
        \draw[blue, thick]  (ptA) -- (ptC);
        \draw[red, thick]   (ptB) -- (ptC);
        \draw[green, thick] (ptA) -- (ptB);
        
        \node (A) at (ptA) [dot]{};
        \node (B) at (ptB) [dot]{};
        \node (C) at (ptC) [dot]{};
        
        \node (X) at (CCC) [draw, circle,  fill = yellow, inner sep = 1pt]{};
        \node (X) at (CCC) [label = right:\!\!$S$]{};
        \node (Y) at (1.5, 0.2) [draw, circle,  fill = orange, inner sep = 1pt]{};
        \node (Y) at (1.5, 0.16) [label = above:$E$]{};
        
        \end{tikzpicture}\end{minipage}

    \caption{An example with a unique solution $S$ and a $0.1$-fair division point $E$ located at a large distance from $S$.}
    \label{fig:exampleIntro}
\end{figure}


It is known \cite{Strom08} that in general, there is no finite protocol
for finding a fair division point for envy-free cake-cutting problem for $d\ge 3$ people with $d-1$ cuts, and Theorem~4 in~\cite{Deng12} says that finding an $\eps$-fair division point in this case is PPAD-complete.
Taking the configuration space point of view, we immediately see that in binary mode,
an $\eps$\nobreakdash-fair division point can be found in $O(n^{d-1})$ queries, where $n$ stands for~$\ceil{1/\eps}$. 
Indeed, it is enough to check all the vertices of some triangulation mesh size less than~$\eps$. 
It is shown in~\cite[Theorem~5]{Deng12} that for the cake division problem, $O(n^{d-2})$ queries are sufficient for $d\ge3$ in the minimal mode (and hence in the binary mode).

The main point of the present article is that, under certain natural conditions on preference sets, a logarithmic number of queries with respect to accuracy is sufficient (provided we know in advance that the preference sets satisfy the conditions).
In particular, in the case when all preference sets are intersections of the simplex with half-spaces of a certain kind (motivated by the cake division problem) Theorem~\ref{pie} says that $(d-1)^2\ceil{\log_2 (n \cdot (d-1))}$ queries suffice in binary mode,\footnote{We conjecture that $(d-1)\ceil{\log_{\frac{d}{d-1}} 2n}$ queries suffice in minimal mode for this case (see Remark~\ref{rem:pie-min}).}
and in the case when all preference sets are intersections with half-spaces of another specific kind (motivated by the rent division problem) 
Theorem~\ref{rent-binary-direct} and Corollary~\ref{rent-binary} imply that 
$$(d-1)^2 \min \{\ceil{\log_2 (n \cdot (d-1))}, \ceil{\log_{\frac{d}{d-1}} n}\}$$ queries suffice in binary mode, while Theorem~\ref{rent} states that $(d-1)\ceil{\log_{\frac{d}{d-1}} n}$ queries suffice in minimal mode.

In order to compare these estimates, set $E(d):=\big(\frac{d}{d-1}\big)^{d-1}$ and observe that for $d\ge 2$ we have $E(d)\in[2,e)$, which implies that
$$
(d-1)\log_{\frac{d}{d-1}} n = (d-1)^2\log_{E(d)} n \le (d-1)^2\log_2 n \le (d-1)^2\log_2 (n \cdot (d-1)).
$$
So, in all these cases $O(\log(n))$ queries are enough. A~series of related results (with logarithmic query complexity algorithms for other query types and other settings in cake-cutting problems) are presented in~\cite{BrNi, ES-HS}.

If the preference sets are convex and $d=3$, Theorem~\ref{convex} says that $O(\log^2(n))$ queries in binary mode are enough for the rental harmony problem.\footnote{The estimate of Theorem~\ref{convex} is also correct in the minimal mode, but the proof of this fact uses a different algorithm, which will be presented in a forthcoming paper.}
A~similar result concerning the cake-cutting problem is Theorem~6 in~\cite{Deng12}.


\section{Stating the problem}
\label{Sec: Stating the problem}

Let $d$ be a positive integer\footnote{In those situations where the case $d = 1$ is degenerate, we will assume by default that $d\ge 2$.}, 
let $A$ be a $(d-1)$-dimensional regular simplex with edges of Euclidean length $1$ in $\R^{d-1}$, 
and let $v_1, \ldots, v_d$ be the vertices of~$A$. 
We use the barycentric coordinate system on~$A$.
Namely, each $x\in A$ can be uniquely expressed in the form
$$
x=\al_1v_1+...+\al_dv_d
$$
with all $\al_i\ge0$ and $\al_1 + \ldots + \al_d = 1$.
We will use the notation $[\al_1, \ldots, \al_d]$ for~$x$.
By $\dist$ we denote the metric defined by
$$
\dist([\al_1, \ldots, \al_d], [\al'_1, \ldots, \al'_d])=\max_{i\in\{1,\ldots,d\}} |\al_i-\al'_i |.
$$
For $j\in\{1, \ldots, d\}$, we denote the facet $\conv{\{v_k\}_{k \neq j}}$ of~$A$ by~$F_j$. 

For the rental harmony problem, $A$~corresponds to all representations of total price as a sum of $d$ nonnegative numbers; and $F_j$ is precisely the set of price distributions with zero price for the $j$th room.
Then allocations of rooms with prices are described by pairs $(\sigma,x)$, where $\sigma$ a permutation of $(1,\ldots,d)$ and $x$ is a point in~$A$. 
A pair $(\sigma,x)$ describes the situation where the $i$th person occupies the $\sigma(i)$th room with a rent of $\alpha_{\sigma(i)}$ of total price, where $\alpha_{\sigma(i)}=\alpha_{\sigma(i)}(x)$ is the $\sigma(i)$th coordinate of~$x$.

Assume that for each $i\in\{1, \ldots, d\}$ a covering $\{A_{i1}, \ldots, A_{id}\}$ of~$A$ consisting of $d$ closed sets is fixed (these coverings are preferences of our agents):
$$
     \bigcup_{j\in\{1,\ldots,d\}} A_{ij} = A,
$$
but the elements of these coverings are \textbf{not known} to us. 

We want to find a way to allocate the rooms and split the total price into $d$ prices of rooms 
such that every person is ``satisfied'' with it in the sense that for the corresponding pair $(\sigma,x)$ we have $x\in A_{i\sigma(i)}$.
We will denote by $\eps$ the maximum ``error'' we allow.
For $\eps \ge 0$, 
we denote the $\eps$-neighbourhoods of the preference sets as 
$$
    A^{\eps}_{ij} = \{x \in \R^{d-1}: \dist(x, A_{ij}) \le \eps \}
$$ 
and the intersections of these $\eps$-neighbourhoods as
$$
    I^{\eps}_i = A^{\eps}_{i1} \cap \ldots \cap A^{\eps}_{id}.
$$

We introduce the following definition.
\begin{Def}
    We say that $x$ in $A$ is an
    \emph{$\eps$-fair division point}
    if there exists a permutation~$\sigma$ of $(1,\ldots,d)$ such that $x \in A^{\eps}_{j\sigma(j)}$ for all $j\in\{1,\ldots,d\}$. 
    We say that $x$ a \emph{fair division point} if $x$ is an $\eps$-fair division point for $\eps = 0$.
\end{Def}

In terms of the rental harmony problem, a price distribution 
is $\eps$-fair (an $\eps$-fair division point) if there is an allocation of rooms
where 
for each person there is an adjustment in room prices (with the same total cost) 
such that
the person is ``satisfied'' with adjusted prices while
the price of each room is adjusted by no more than $\eps$ of the total price. 
(Perhaps no one is satisfied with the initial $\eps$-fair price distribution and different persons need different adjustments.)
An $\eps$-fair division point may be located ``far away'' (more than $\eps$ away) from all fair division points (see Fig.~\ref{fig:exampleIntro}).

General theory provides some reasonable sufficient conditions when a fair division point exists. 
For example, the rainbow KKM lemma tells it exists whenever
$$
    \conv{\{v_j\}_{j \in J}} \subset \bigcup_{j \in J} A_{ij} \text{\; for all\;} i\in \{1, \ldots, d\}, J \subset \{1, \ldots, d\},
$$
and a rainbow generalization of Sperner's lemma implies that it exists whenever
$$
    \conv{\{v_j\}_{j \in J}} \cap \bigcup_{j \notin J}A_{ij}  = \emptyset \text{\; for all\;} i\in \{1, \ldots, d\}, J \subset \{1, \ldots, d\}.
$$

Our goal is to find an $\eps$-fair division point using the smallest possible number of queries. 

Now we give a formal definition for the two types of queries.
One of these two is as follows: we choose an index $i$ and a point $x \in A$ and receive an index $j$ such that $x \in A_{ij}$ as an answer; we call this type of query the
\emph{minimal mode}. 
Another approach is: we choose indices $i$ and $j$ and a point $x \in A$ and receive ``yes'' if $x \in A_{ij}$ and ``no'' otherwise; 
this type of query is called the 
\emph{binary mode}.


\bigskip

\section{Preference sets with the inclusion property}


In this section we prove two auxiliary constructive statements showing that if the collection of preference sets has a specific property (given in  Definition~\ref{def:inclusion property}),
then one can find a solution point (i)~in a set of points each of which is found separately for each of the agents according to their preferences, and (ii)~without information about (the preferences of) one of the agents.\footnote{Of course, to move from a solution point to the corresponding solution itself (for example, to a specific resource allocation between the agents), information about the excluded agent's preferences may also be needed.} 
\footnote{This is directly related to the results of~\cite{Asada18}, see also \cite{Frick19}, where it is proved that there exists an envy-free rent division that remains so for any change in the preferences of one of the tenants.}
Lemma \ref{d_orderings} is a purely combinatorial statement, and Theorem~\ref{d-1_enough} applies this combinatorics to our problem.

The formulation of Lemma~\ref{d_orderings} use many quantifiers and indices, so on the advice of the referee, we preface it with the following warm-up lemma, corresponding to the case ``without a secretive agent''.
(Lemma~\ref{d_orderings_warm_up} is not used in further proofs and is given only for clarity of presentation.)
\begin{Lm}
\label{d_orderings_warm_up}
    Let $\{ \le_1,\ldots, \le_d\}$ 
    be $d$ linear orderings of $D^+ = \{1, \ldots, d\}$. 
    Then there is an element $i_0$ in $D^+$, and a bijective map $\pi\colon D^+ \to D^+$, such that for all $i\in D^+$ we have 
    $$
        i_0 \le_{\pi(i)} i.
    $$
\end{Lm}
\begin{proof}
    We prove the lemma by induction on~$d$. 
    If $d = 1$, the statement is true for $i_0=1$.
    If $d\ge 2$, let $k\in D^+$ be the maximal element with respect to~$\le_1$.
    We apply induction hypothesis to the set $D^+ \setminus \{k\}$ and all of the orderings except $\le_1$, and get some $i_0 \in D^+ \setminus \{k\}$ and a bijective map $$\pi'\colon D^+ \setminus \{k\} \to \{2, \ldots, d\} $$ with the desired property. 
        Since $k$ is maximal with respect to $\le_1$, we can define $\pi$ as
    $$
        \pi(i) = 
        \begin{cases}
            \pi'(i) & \text{if~~}  i \neq k, \\
            1 & \text{if~~}  i = k. 
        \end{cases}
    $$
\end{proof}

\begin{Lm}
\label{d_orderings}
    Let $d\ge 2$, and let $\{ \le_1,\ldots, \le_d \}$ 
    be $d$ linear orderings of $D = \{1, \ldots, d - 1\}$. 
    Then $D$ contains an element $i_0$ such that for each $j_0\in \{1, \ldots, d\}$ there exists a bijective map $\pi\colon D \to \{1, \ldots d\} \setminus \{j_0\}$ with the property that for all $i \in D$ we have
    $$
        i_0 \le_{\pi(i)} i.
    $$
\end{Lm}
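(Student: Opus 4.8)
The plan is to argue by induction on $d$, constructing the required bijections $\pi$ explicitly. (For a single fixed $j_0$, the existence of $\pi$ is a bipartite matching assertion that one could read off from Hall's theorem; but since $i_0$ has to be chosen once and for all — before $j_0$ — it seems cleanest to produce everything by hand.) The base case $d = 2$ is immediate: there $D = \{1\}$, one must take $i_0 = 1$, and for each $j_0 \in \{1,2\}$ the only candidate map sends $1$ to the one remaining index, the inequality $i_0 \le_{\pi(1)} i_0$ holding by reflexivity.

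For the inductive step, assume $d \ge 3$ and that the lemma holds with $d$ replaced by $d-1$. For each $k \in \{1, \dots, d\}$ let $M_k \in D$ be the $\le_k$-largest element of $D$. Since $k \mapsto M_k$ maps a $d$-element set into the $(d-1)$-element set $D$, there is an element $w \in D$ with $M_k = w$ for at least two values of $k$; relabel so that $\le_d$ is one of these. Set $D' := D \setminus \{w\}$ (a set with $d-2$ elements) and apply the induction hypothesis to the $d - 1$ restricted orderings $\le_1, \dots, \le_{d-1}$ on $D'$. This yields an element $i_0 \in D'$ such that for every $j_0' \in \{1, \dots, d-1\}$ there is a bijection $\pi' \colon D' \to \{1, \dots, d-1\} \setminus \{j_0'\}$ with $i_0 \le_{\pi'(i)} i$ for all $i \in D'$. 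I claim the same $i_0$ works for the original family of $d$ orderings on $D$.

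To prove the claim, fix $j_0 \in \{1, \dots, d\}$. If $j_0 \ne d$, take the bijection $\pi'$ supplied for $j_0' = j_0$ and extend it to $\pi \colon D \to \{1, \dots, d\} \setminus \{j_0\}$ by setting $\pi(w) := d$; here $i_0 \le_d w$ because $w = M_d$, and for $i \in D'$ the inequality $i_0 \le_{\pi(i)} i$ is exactly the one furnished by $\pi'$ (replacing the restricted ordering by $\le_k$ itself changes nothing, since $i_0, i \in D'$). If instead $j_0 = d$, we need a bijection $\pi \colon D \to \{1, \dots, d-1\}$. We first choose an index $j_0' \in \{1, \dots, d-1\}$ with $i_0 <_{j_0'} w$; such an index exists, for otherwise $i_0 >_k w$ for every $k \in \{1, \dots, d-1\}$, whence $w$ is not the $\le_k$-largest element of $D$ for any such $k$, leaving $\le_d$ as the only ordering with $w$ as its maximum — contradicting the choice of $w$. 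Taking the bijection $\pi'$ for this $j_0'$ and extending by $\pi(w) := j_0'$ gives a bijection onto $\{1, \dots, d-1\}$ for which all the required inequalities, including $i_0 \le_{j_0'} w$, hold.

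The crux of the argument — and the only place where something could fail — is this final step: when $j_0 = d$ the element $w$ must be assigned to one of the orderings $\le_1, \dots, \le_{d-1}$, and we need at least one of them to rank $i_0$ below $w$. This is exactly what choosing $w$ to be a maximum of $D$ at least twice guarantees: one such ordering is ``used up'' as the ordering discarded in the recursion, and at least one more remains. All other verifications — that the maps constructed are genuine bijections, and that restriction of the orderings to $D'$ preserves comparisons between elements of $D'$ — are routine.
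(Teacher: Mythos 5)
Your proof is correct and follows essentially the same route as the paper's: both find, by pigeonhole, an element that is maximal for two of the $d$ orderings, discard that element together with one of those two orderings, invoke the induction hypothesis, and then split into the two cases according to whether $j_0$ is the discarded ordering. Your relabelling so that the discarded ordering is $\le_d$, and your identification of the ``second maximal ordering'' at the point of need rather than up front, are only cosmetic differences.
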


Before moving on to the proof, for easier understanding, we give an equivalent indices-free reformulation of the lemma.

\begin{Lm_orderings}
\label{d_orderings_noind}
    Let $m\ge 1$ be an integer, let $E$ be a set of $m$ elements, and let $\mathscr{O}$ be a set of $m+1$ linear orderings of~$E$.
    Then $E$ contains an element $e_*$ such that for each $o \in \mathscr{O}$ there exists a bijective map $\pi_o\colon E \to \mathscr{O} \setminus \{o\}$ with the property that for all $e \in E$ we have
    $$
        e_* \le_{\pi_o(e)} e,
    $$
    where $e_* \le_{\pi_o(e)} e$ means that $e_*$ is not greater than $e$ w.r.t.~$\pi_o(e)$.
\end{Lm_orderings}

\begin{proof}
   We will prove the lemma by induction on~$d$. 
    If $d = 2$, the statement is trivially true for $i_0=1$, so we may assume $d\ge 3$.\footnote{The case $d = 3$, with two elements and three orderings, is also easy to do without induction. 
    In this case, there is an element $k \in D=\{1,2\}$ that is minimal with respect to two distinct orderings, and we obviously obtain what is required by setting $i_0=k$.}
    Since there are $d$ orderings and $d-1$ elements, it follows that there exists an element $k \in D$ that is maximal with respect to two distinct orderings $l_1$ and $l_2$. 
    Now we apply induction hypothesis to the set $D \setminus \{k\}$ and all of the orderings except $l_1$, and get some $i_0 \in D \setminus \{k\} \subset D$. 
    We claim that $i_0$ has the desired property for the initial problem as well.
    
    Suppose we choose (in $\{1, \ldots, d\}$) some $j_0 \neq l_1$. 
    By the choice of $i_0$ there exists a map 
    $$\pi'\colon D \setminus \{k\} \to \{1, \ldots, d\} \setminus \{l_1, j_0\}$$ with the desired property. 
    Since $k$ is maximal with respect to $l_1$, we can define $\pi$ as
    $$
        \pi(i) = 
        \begin{cases}
            \pi'(i) & \text{if~~}  i \neq k, \\
            l_1 & \text{if~~}  i = k. 
        \end{cases}
    $$
    
    In the remaining case with $j_0 = l_1$, by the choice of $i_0$ there exists a map 
    $$\pi'\colon D \setminus \{k\} \to \{1, \ldots, d\} \setminus \{l_1, l_2\}$$ 
    with the desired property. Since $k$ is maximal with respect to $l_2$, it follows that we can define $\pi$ as
    $$
        \pi(i) = 
        \begin{cases}
            \pi'(i) & \text{if~~}  i \neq k, \\
            l_2 & \text{if~~}  i = k. 
        \end{cases}
    $$
\end{proof}

\begin{Def}
\label{def:inclusion property}
    We say that our collection of preference sets 
    $A_{ij}$ satisfies the \emph{inclusion property} if for each triplet $(i_1, i_2, j)$ with $i_1, i_2\in \{1, \ldots, d - 1\}$ and $j\in \{1, \ldots, d\}$ we have either $A_{i_1 j} \subset A_{i_2 j}$ or $A_{i_2 j} \subset A_{i_1 j}$.
\end{Def}

\begin{Th}
\label{d-1_enough}
Suppose the collection of sets $A_{ij}$ satisfies the inclusion property. 
Then each sequence of points $x_1, \ldots, x_{d-1}$ with $x_i \in I_i = A_{i1} \cap \ldots \cap A_{id}$ for each $i\in\{1, \ldots, d-1\}$ contains a fair division point.
Moreover, there exists an algorithm that, given such a sequence and the related data about inclusions, finds a fair division point in the sequence.
\end{Th}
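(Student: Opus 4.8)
The plan is to combine the combinatorial content of Lemma~\ref{d_orderings} with the inclusion property, converting set-inclusions into linear orderings. First I would fix a sequence $x_1, \ldots, x_{d-1}$ with $x_i \in I_i$. For each column $j \in \{1, \ldots, d\}$, the inclusion property says the sets $A_{1j}, \ldots, A_{(d-1)j}$ are totally ordered by inclusion; define a linear ordering $\le_j$ on $D = \{1, \ldots, d-1\}$ by declaring $i \le_j i'$ whenever $A_{ij} \subseteq A_{i'j}$ (breaking ties, if two sets happen to be equal, arbitrarily but consistently, so that $\le_j$ is genuinely a linear order). The key observation to record is monotonicity in the other direction: if $i \le_j i'$, then $A_{ij} \subseteq A_{i'j}$, so any point lying in $A_{ij}$ also lies in $A_{i'j}$; in particular, since $x_i \in I_i \subseteq A_{ij}$, we get $x_i \in A_{i'j}$ for every $i'$ with $i \le_j i'$.

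Next I would apply Lemma~\ref{d_orderings} to the $d$ orderings $\le_1, \ldots, \le_d$ on the $(d-1)$-element set $D$. This yields an index $i_0 \in D$ such that for each $j_0 \in \{1, \ldots, d\}$ there is a bijection $\pi \colon D \to \{1, \ldots, d\} \setminus \{j_0\}$ with $i_0 \le_{\pi(i)} i$ for all $i \in D$. I claim $x_{i_0}$ is a fair division point. Pick any $j_0$; applying the lemma gives such a $\pi$. Extend $\pi$ to a permutation $\sigma$ of $\{1, \ldots, d\}$ by setting $\sigma(i) = \pi(i)$ for $i \in D = \{1, \ldots, d-1\}$ and $\sigma(d) = j_0$. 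For each $i \in D$, the inequality $i_0 \le_{\pi(i)} i$ together with the monotonicity observation (applied with column $j = \pi(i)$, and using $x_{i_0} \in A_{i_0, \pi(i)}$) gives $x_{i_0} \in A_{i, \pi(i)} = A_{i, \sigma(i)}$. For the remaining index $i = d$, we need $x_{i_0} \in A_{d, \sigma(d)} = A_{d, j_0}$; since $\{A_{d1}, \ldots, A_{dd}\}$ is a covering of $A$, there is \emph{some} $j_0$ for which $x_{i_0} \in A_{d j_0}$ — so rather than picking $j_0$ arbitrarily, I first choose $j_0$ to be an index with $x_{i_0} \in A_{d j_0}$, then run the argument above with that $j_0$. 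This shows $x_{i_0} \in A_{j \sigma(j)}$ for all $j$, i.e. $x_{i_0}$ is a fair division point, proving the first assertion.

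For the algorithmic ``moreover'' part, I would observe that everything above is effective given the promised inclusion data: from the inclusion relations one computes the orderings $\le_j$ directly; the proof of Lemma~\ref{d_orderings} is itself an induction that translates into a recursive procedure for finding $i_0$ and, for each $j_0$, the bijection $\pi$; and finding a $j_0$ with $x_{i_0} \in A_{d j_0}$ costs one query in minimal mode (or at most $d$ queries in binary mode). The total work is polynomial in $d$ and requires essentially a single query to the $d$-th agent, which matches the spirit of the theorem — the fair division point is located among the $x_1, \ldots, x_{d-1}$ with no prior knowledge of agent $d$'s preferences beyond one evaluation.

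**Main obstacle.** The only real subtlety is the bookkeeping around the excluded agent: one must choose $j_0$ adapted to $x_{i_0}$ (as an index covering it for agent $d$) \emph{before} invoking the lemma, and then check that the lemma's bijection $\pi$, extended by $\sigma(d) = j_0$, is genuinely a permutation and that the monotonicity step applies in the correct direction (inclusion $A_{ij} \subseteq A_{i'j}$ pushes points \emph{forward}, and the lemma is stated so that $i_0$ is small, hence $x_{i_0}$ propagates into all the needed sets). Once the direction conventions in the definition of $\le_j$ are pinned down to be consistent with Lemma~\ref{d_orderings}, the rest is routine.
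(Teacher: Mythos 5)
Your proposal is correct and follows essentially the same route as the paper: convert the inclusion chains into linear orderings $\le_j$, apply Lemma~\ref{d_orderings} to obtain $i_0$, choose $j_0$ with $x_{i_0}\in A_{dj_0}$ via the covering property, and extend the resulting bijection $\pi$ to the permutation $\sigma$. Your extra remark about breaking ties when two sets coincide is a harmless refinement the paper leaves implicit.
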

\begin{proof}
    Inclusion orderings on $A_{ij}$ determine orderings on $\{1, \ldots, d-1\}$ by the rule
    $$
         i_1 \le_{j} i_2 \; \Leftrightarrow \; A_{i_1j} \subset A_{i_2j}.
    $$
    We apply Lemma~\ref{d_orderings} to these orderings and obtain some index $i_0\in\{1, \ldots, d - 1\}$ having the property described in Lemma~\ref{d_orderings}. We claim that $x_{i_0}$ is a fair division point.
    
    Indeed, $x_{i_0}$ is in $A_{dj_0}$ for some $j_0$ since $\{A_{d1},\ldots, A_{dd}\}$ is a covering of~$A$.
    Then by the choice of $i_0$ there exists a map 
    $$\pi\colon \{1, \ldots, d - 1\} \to \{1, \ldots, d\} \setminus \{j_0\}$$ such that $i_0 \le_{\pi(i)} i$. Now we define the desired permutation $\sigma$ as
    $$
        \sigma(i) = 
        \begin{cases}
            \pi(i) & \text{if~~}  i\in\{1, \ldots, d - 1\}, \\     
            j_0 & \text{if~~} i = d. \\
        \end{cases}
    $$
    The condition $i_0 \le_{\pi(i)} i$ means that $A_{i_0 \pi(i)} \subset A_{i \pi(i)}$. Then for each $i\in\{1, \ldots, d - 1\}$ we have    
    $$
        x_{i_0} \in A_{i_0 \pi(i)} \subset A_{i \pi(i)} = A_{i \sigma(i)},  
    $$
    and $x_{i_0} \in A_{dj_0} = A_{d \sigma(d)}$ straight from the definition of $j_0$.
\end{proof}


\bigskip

\section{Logarithmic algorithms for the ``linear'' preference sets}
\label{Sec: Half-spaces}

Let $\{A_{ij}\}, \; i=1,...,d, \, j=1,...,d$, as above, be a collection of preference sets. 
In this section we consider two kinds of {\em linear preference sets} (\/{\em LPS}\/):
\begin{itemize}
    \item[(i)] each of $A_{ij}$ has the form $\{[\al_1,\ldots,\al_d] \in A: \al_j \ge a_{ij} \}$; 
    \item[(ii)] each of $A_{ij}$ has the form $\{[\al_1,\ldots,\al_d] \in A: \al_j \le a_{ij} \}$,
\end{itemize}
where $\{a_{ij}\}$ is a set of real numbers.

Case (i) is a particular case of the cake-cutting problem (the number of cuts is~$d-1$; each agent~$i$ accepts the $j$th piece if its length is at least~$a_{ij}$). 
The condition that $\{A_{i1},\ldots,A_{id}\}$ is a covering of~$A$ implies that $\sum_{j=1}^{d} a_{ij} \le 1$ for all~$i$. 
In addition, the standard cake-cutting preferences condition that ``players prefer any piece with mass to an empty piece'' means here that all $a_{ij}$ are in $(0,1)$. 
In (ii) we have a particular case of the rental harmony problem with $\sum_{j=1}^{d} a_{ij} \ge 1$ for all~$i$. Note that in both cases $A_{ij}$ is the intersection of $A$ and a half-space that is bounded by the hyperplane $\al_j=a_{ij}$.

\bigskip

The following theorem is our ``logarithmic complexity'' result for the cake-cutting problem.

\begin{Th}
\label{pie}
    Suppose that each of the sets $A_{ij}$, being a proper subset of the simplex~$A$, is the intersection of~$A$ and a closed half-space with boundary parallel to $F_j$ such that $v_j \in A_{ij}$. Then we can find an $\eps$-fair division point in binary mode using at most $$(d-1)^2\ceil{\log_2 (n \cdot (d-1))}$$ queries, where $n = \ceil{1/\eps}$.
\end{Th}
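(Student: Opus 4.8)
The plan is to reduce Theorem~\ref{pie} to Theorem~\ref{d-1_enough} by showing that, in the ``cake-cutting'' situation (ii) [i.e. case (i) of the LPS setup, with $A_{ij} = \{[\al_1,\ldots,\al_d] : \al_j \ge a_{ij}\}$ and $v_j \in A_{ij}$], the collection of preference sets, after a suitable $\eps$-enlargement, satisfies the inclusion property, and that for each agent $i \in \{1,\ldots,d-1\}$ we can locate a point in $I_i$ (or an $\eps$-enlarged version of it) cheaply in binary mode.

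First I would make the following observation about the geometry. Each $A_{ij}$ is determined by a single real threshold $a_{ij}$, and the half-space condition $v_j \in A_{ij}$ forces the inequality to point toward $v_j$, i.e. $A_{ij} = \{x \in A : \al_j(x) \ge a_{ij}\}$ (the case $A_{ij} = A$ is the degenerate $a_{ij} \le 0$, which we can absorb). Hence for a \emph{fixed} $j$, the sets $A_{1j}, A_{2j}, \ldots, A_{d-1,j}$ are nested: $A_{i_1 j} \subset A_{i_2 j} \iff a_{i_1 j} \ge a_{i_2 j}$. So the inclusion property (Definition~\ref{def:inclusion property}) holds automatically for the true sets $A_{ij}$ with $i \le d-1$. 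The point is that we do not know the thresholds $a_{ij}$; what we must do is, for each agent $i \in \{1,\ldots,d-1\}$, find a point $x_i$ that lies in (an $\eps$-fattening of) $I_i = A_{i1} \cap \cdots \cap A_{id}$, together with enough comparison data among the thresholds to run the algorithm of Theorem~\ref{d-1_enough}. Note $I_i \ne \emptyset$ would need $\sum_j a_{ij} \le 1$; in general $I_i^\eps \ne \emptyset$ once $\sum_j a_{ij} \le 1 + (\text{something})\cdot\eps$, and the covering hypothesis $\bigcup_j A_{ij} = A$ gives exactly $\sum_j a_{ij} \le 1$, so $I_i$ itself is nonempty.

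The key algorithmic step is a per-agent binary search. Fix $i \in \{1,\ldots,d-1\}$. Each threshold $a_{ij}$ lies in $[0,1]$; I would binary-search for $a_{ij}$ on the grid $\{0, 1/N, 2/N, \ldots, 1\}$ with $N = n(d-1)$, using queries of the form ``is the point with $\al_j = t$ (and the other coordinates filled in arbitrarily, say equal) in $A_{ij}$?'' — wait, more carefully: a binary-mode query at a point $x$ answers whether $\al_j(x) \ge a_{ij}$, and this depends on $x$ only through $\al_j(x)$, so I can treat it as a one-dimensional threshold query. In $\ceil{\log_2 N} = \ceil{\log_2(n(d-1))}$ queries I pin $a_{ij}$ down to an interval of length $1/N$, obtaining an estimate $\tilde a_{ij}$ with $\tilde a_{ij} \le a_{ij} \le \tilde a_{ij} + 1/N$. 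Doing this for all $d$ values of $j$ costs $d\ceil{\log_2(n(d-1))}$ queries per agent — but I should aim for $(d-1)\ceil{\log_2(n(d-1))}$ per agent to hit the claimed total $(d-1)^2\ceil{\log_2(n(d-1))}$, so the refinement is: for each agent only $d-1$ of the thresholds need to be found, because once $d-1$ of them are known the last is constrained by $\sum_j a_{ij} \le 1$ to be determined up to the same accuracy (and one checks the resulting candidate point lies in the remaining set up to $\eps$). Then set $x_i := $ the point with $\al_j(x_i) = \tilde a_{ij}$ for the $d-1$ coordinates found and the remaining coordinate equal to $1 - \sum \tilde a_{ij} \ge 0$; the estimation error $1/N$ per coordinate, summed over $\le d-1$ coordinates, is at most $(d-1)/N = 1/n \le \eps$ in $\ell^1$, hence (up to the fixed comparability of norms on the simplex, or working directly with the $\ell^1$-style metric) $x_i \in A_{ij}^{\eps}$ for all $j$, i.e. $x_i \in I_i^{\eps}$.

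Finally I would feed the data to Theorem~\ref{d-1_enough}: the true sets $\{A_{ij}^{\eps/?}\}$ — more precisely, I apply the combinatorial core (Lemma~\ref{d_orderings}) to the $d$ orderings of $\{1,\ldots,d-1\}$ induced by the estimated thresholds $\tilde a_{ij}$ for each $j$ (ties broken arbitrarily), get an index $i_0$, and claim $x_{i_0}$ is an $\eps$-fair division point. The verification mirrors the proof of Theorem~\ref{d-1_enough}: $x_{i_0}$ lies in some $A_{d j_0}$ (covering property for agent $d$ — but agent $d$'s sets are unknown, so here one instead uses that the coordinates of $x_{i_0}$ sum to $1$ and a pigeonhole/$\eps$-covering argument shows $x_{i_0} \in A_{d j_0}^{\eps}$ for some $j_0$, since $\sum_j a_{dj}\le 1$), and then for $i \le d-1$ the ordering property gives a bijection $\pi$ with $a_{i_0,\pi(i)} \le a_{i,\pi(i)} + 1/N$, whence $x_{i_0} \in A_{i_0,\pi(i)}^{\eps} \subset A_{i,\pi(i)}^{\eps + 1/N} \subseteq A_{i,\pi(i)}^{2\eps}$, which after rescaling $\eps$ by a constant (or choosing $N$ slightly larger, e.g. absorbing the factor into $n(d-1)$) gives the $\eps$-fairness.

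\textbf{Main obstacle.} The delicate point is the bookkeeping of errors: I must track how the $1/N$ discretization error in each estimated threshold propagates through (a)~membership $x_i \in I_i^{\eps}$, (b)~the "missing" $d$-th coordinate of each $x_i$ and the "missing" $d$-th agent (whose preferences are never queried — here Theorem~\ref{d-1_enough} is essential, as it produces a fair point using only agents $1,\ldots,d-1$), and (c)~the translation of the combinatorial inclusions $A_{i_0,\pi(i)} \subset A_{i,\pi(i)}$ (true for exact thresholds) into approximate inclusions for the estimated ones — and then confirm that the total is $\le \eps$ with the stated query count $(d-1)^2\ceil{\log_2(n(d-1))} = (d-1)\cdot(d-1)\ceil{\log_2(n(d-1))}$, i.e. $d-1$ agents, $d-1$ coordinates each, $\ceil{\log_2(n(d-1))}$ queries per coordinate. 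I expect the only real subtlety is making sure that handling the $d$-th coordinate "for free" (via $\sum a_{ij}\le 1$) is legitimate and does not degrade the accuracy beyond the single $1/n$ budget, and that the metric on $A$ (Euclidean vs. barycentric $\ell^1$) contributes only a harmless constant which can be hidden by choosing the grid fineness $n(d-1)$ rather than $n$.
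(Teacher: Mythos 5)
Your proposal is correct and follows essentially the same route as the paper: for each of the agents $1,\dots,d-1$ you binary-search the $d-1$ thresholds $a_{i1},\dots,a_{i,d-1}$ from below to accuracy $\eps/(d-1)$, assemble $x_i\in I_i^{\eps}$ with the last coordinate obtained by complementation (which places $x_i$ exactly in $A_{id}$ thanks to $\sum_j a_{ij}\le 1$), and hand the known surrogate sets built from the estimated thresholds --- which automatically satisfy the inclusion property --- to Theorem~\ref{d-1_enough}. The $2\eps$ loss you worry about dissolves if, as in the paper, you verify exact fairness for the surrogate sets $A'_{ij}=\{\al_j\ge \tilde a_{ij}\}$ first and only then use $A'_{ij}\subset A^{\eps}_{ij}$, so no rescaling of $\eps$ or further enlargement of the grid is needed.
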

\begin{proof} 
By assumptions we have LPS case (i). 
    Fix an index $i\in\{1,\ldots,d-1\}$ and denote $\delta = \frac{\eps}{d-1}$. 
    Since $A_{ij}$ are proper subsets of~$A$ and cover~$A$, it follows that $0<a_{ij}<1$.
    For every $j\in\{1,\ldots,d-1\}$ 
    we can use binary search to find approximations $c_{ij}$ such that 
    $$\max(a_{ij} - \delta, 0) \le c_{ij} < a_{ij}$$ 
    with $\ceil{\log_2 \frac{1}{\delta}}$ queries. 
    We set $$x_i = [c_{i1}, c_{i2}, \ldots, c_{i(d-1)}, 1 - c_{i1} - \ldots - c_{i(d-1)}].$$
    The point $x_i$ lies in $A$ since
    $$
    0 \le \sum_{j=1}^{d-1} c_{ij} < \sum_{j=1}^{d-1} a_{ij} \le 1.
    $$

    We spent $(d-1)\ceil{\log_2(n \cdot (d-1))}$ queries on the fixed index~$i$. Repeating this for all 
    $i\in\{1,\ldots,d-1\}$ we obtain $d-1$ points $x_1$, \ldots, $x_{d-1}$ via $(d-1)^2\ceil{\log_2(n \cdot (d-1))}$ queries.

    For $i \in \{1,\ldots,d-1\}$ and $j \in \{1,\ldots,d\}$ we set $c_{id}=1 - c_{i1} - \ldots - c_{i(d-1)}$ and
    $$
        A'_{ij} = \{ [\al_1, \ldots, \al_d]\in A : \al_j \ge c_{ij} \}.
    $$
    Then $x_i \in A'_{i1} \cap \ldots \cap A'_{id}$. The sets $A'_{ij}$ satisfy the inclusion property and are known to us, so by Theorem \ref{d-1_enough} we can find a fair division point $z\in\{x_1, \ldots, x_{d-1}\}$ for them. 
    But we have $A'_{id} \subset  A^{\eps}_{id}$ and $A'_{ij} \subset A^{\delta}_{ij}$ if $j\in\{1,\ldots,d-1\}$, so $z$ is also an $\eps$-fair division point for the initial problem.
\end{proof}
\begin{Example}
    
    The following figure illustrates the first two steps of binary search for finding (an approximation for) \(a_{12}\). The blue dashed line shows the boundary of the actual set \(A_{12}\). The point with the question mark is the point about which we ask whether it lies in \(A_{12}\), and then cut off a piece of \(A\) according to the answer.
    
    \begin{figure}[H]
        \centering
        \begin{minipage}[h]{0.25\linewidth}\centering\begin{tikzpicture}
        [
        scale=1.2,
        >=stealth,
        point/.style = {draw, circle,  fill = black, inner sep = 1pt},
        dot/.style   = {draw, circle,  fill = black, inner sep = .75pt},
        use Hobby shortcut
        ]
        
        \def\xang{0.500}
        \def\yang{0.866}
        \coordinate (ptA) at (0, 0);
        \coordinate (ptB) at (3, 0);
        \coordinate (ptC) at (3*\xang, 3*\yang);
       
        \draw[blue, dashed, thick] (1.7*\xang, 1.7*\yang) -- (3 - 1.7*\xang, 1.7*\yang);
        
        \node (A) at (0,0)                                            [dot]{};
        \node (B) at (+1.000 + 1.000 + 1.000, +0.000 + 0.000 + 0.000) [dot]{};
        \node (C) at (+\xang + \xang + \xang, +\yang + \yang + \yang) [dot]{}; 
        \node (Q) at (1.5, 1.5 * \yang) [dot]{};
        \node (mark) at (1.7, 1.5*\yang - 0.05) []{\textbf{?}};
        \foreach \from/\to in {A/B, B/C, C/A}
        {
            \draw[line width=0.4mm] (\from) -- (\to);
        }
        \node (F1) at (2, 1.6) [label = right:$F_{1}$]{};
        \node (F2) at (1.5, 0) [label = below:$F_{2}$]{};
        \node (F3) at (1, 1.6) [label = left:$F_{3}$]{};
        \end{tikzpicture}\end{minipage}\(\to\)
        \begin{minipage}[h]{0.25\linewidth}\centering\begin{tikzpicture}
        [
        scale=1.2,
        >=stealth,
        point/.style = {draw, circle,  fill = black, inner sep = 1pt},
        dot/.style   = {draw, circle,  fill = black, inner sep = .75pt},
        use Hobby shortcut
        ]
        
        \def\xang{0.500}
        \def\yang{0.866}
        
        \coordinate (ptA) at (0, 0);
        \coordinate (ptB) at (3, 0);
        \coordinate (ptC) at (3*\xang, 3*\yang);
        
        \draw[blue, dashed, thick] (1.7*\xang, 1.7*\yang) -- (3 - 1.7*\xang, 1.7*\yang);
        \draw[line width=0.4mm]  (1.5*\xang, 1.5*\yang) -- (3 - 1.5*\xang, 1.5*\yang);
        \draw[EDR] (0,0) -- (3, 0) -- (3 - 1.5*\xang, 1.5*\yang) -- (1.5*\xang, 1.5*\yang) -- cycle;
        
        \node (A) at (ptA) [dot]{};
        \node (B) at (ptB) [dot]{};
        \node (C) at (ptC) [dot]{};
        \node (Q) at (1.5, 2.25*\yang) [dot]{};
        \node (mark) at (1.7, 2.25*\yang - 0.05) []{\textbf{?}};
        \foreach \from/\to in {A/B, B/C, C/A}
        {
            \draw[line width=0.4mm] (\from) -- (\to);
        }
        \node (F1) at (2, 1.6) [label = right:$F_{1}$]{};
        \node (F2) at (1.5, 0) [label = below:$F_{2}$]{};
        \node (F3) at (1, 1.6) [label = left:$F_{3}$]{};
        \end{tikzpicture}\end{minipage}\(\to\)
        \begin{minipage}[h]{0.25\linewidth}\centering\begin{tikzpicture}
        [
        scale=1.2,
        >=stealth,
        point/.style = {draw, circle,  fill = black, inner sep = 1pt},
        dot/.style   = {draw, circle,  fill = black, inner sep = .75pt},
        use Hobby shortcut
        ]
        
        \def\xang{0.500}
        \def\yang{0.866}
        \coordinate (ptA) at (0, 0);
        \coordinate (ptB) at (3, 0);
        \coordinate (ptC) at (3*\xang, 3*\yang);

        \draw[blue, dashed, thick] (1.7*\xang, 1.7*\yang) -- (3 - 1.7*\xang, 1.7*\yang);
        
        \draw[line width=0.4mm]  (1.5*\xang, 1.5*\yang) -- (3 - 1.5*\xang, 1.5*\yang);
        \draw[EDR] (0,0) -- (3, 0) -- (3 - 1.5*\xang, 1.5*\yang) -- (1.5*\xang, 1.5*\yang) -- cycle;
        
        \draw[line width=0.4mm]  (2.25*\xang, 2.25*\yang) -- (3 - 2.25*\xang, 2.25*\yang);
        \draw[EDR] (3*\xang, 3*\yang) -- (3 - 2.25*\xang, 2.25*\yang) -- (2.25*\xang, 2.25*\yang) -- cycle;
        
        \node (A) at (ptA) [dot]{};
        \node (B) at (ptB) [dot]{};
        \node (C) at (ptC) [dot]{};
        
        \node (Q) at (1.5, 1.875*\yang) [dot]{};
        \node (mark) at (1.65, 1.875*\yang + 0.1) []{\textbf{?}};
        \foreach \from/\to in {A/B, B/C, C/A}
        {
            \draw[line width=0.4mm] (\from) -- (\to);
        }
        \node (F1) at (2, 1.6) [label = right:$F_{1}$]{};
        \node (F2) at (1.5, 0) [label = below:$F_{2}$]{};
        \node (F3) at (1, 1.6) [label = left:$F_{3}$]{};
        \end{tikzpicture}\end{minipage}
    \end{figure}
\end{Example}

The following Theorem~\ref{rent-binary-direct} is dual to Theorem~\ref{pie} (cake-cutting problem / rental harmony problem), its proof uses arguments from the proof of Theorem~\ref{pie}.
\begin{Th}
\label{rent-binary-direct}
    Suppose that each of the sets $A_{ij}$, being a proper subset of the simplex~$A$, is the intersection of~$A$ and a closed half-space with boundary parallel to $F_j$ such that $F_j \subset A_{ij}$. Then we can find an $\eps$-fair division point in binary mode using at most $$(d-1)^2\ceil{\log_2 (n \cdot (d-1))}$$ queries, where $n = \ceil{1/\eps}$.
\end{Th}
\begin{proof} 
By assumptions we have LPS case (ii). 
Since $A_{ij}$ are proper subsets of~$A$, it follows that $0\le a_{ij}<1$.
Similar to the proof of Theorem~\ref{pie}, we set $\delta = \frac{\eps}{d-1}$ and use binary search to find approximations $c_{ij}$ such that 
    $$a_{ij} < c_{ij} \le \min(a_{ij} + \delta, 1)$$ for all 
    $i\in\{1,\ldots,d-1\}$ and $j\in\{1,\ldots,d-1\}$
    with $(d-1)^2\ceil{\log_2 \frac{1}{\delta}}$ queries. 
   
    In LPS case~(ii), in contrast to LPS case~(i), the inequality $\sum_{1 \le j \le d - 1} c_{ij} \le 1$ may not hold, and we consider two subcases:
    \begin{itemize}
    \item[(a)] $\sum_{1 \le j \le d - 1} c_{ij} \le 1$; 
    \item[(b)] $\sum_{1 \le j \le d - 1} c_{ij} > 1$.
\end{itemize}

    In subcase~(a), the proof repeats verbatim that of Theorem~\ref{pie}, up to replacing $\al_j \ge c_{ij}$ with $\al_j \le c_{ij}$ in the definition of $A'_{ij}$.
    
    In subcase~(b), we set $c_{id}=0$, $S_i=\sum_{1 \le j \le d - 1} c_{ij}$,
    $$
    x_i = [c_{i1}/S_i, c_{i2}/S_i, \ldots, c_{i(d-1)}/S_i, 0],
    $$
    and
    $$
        A'_{ij} = \{ [\al_1, \ldots, \al_d]\in A : \al_j \le c_{ij}/S_i \}.
    $$
    We obviously have $x_i \in A'_{i1} \cap \ldots \cap A'_{id}$. 
    As in the proof of Theorem~\ref{pie}, the sets $A'_{ij}$ satisfy the inclusion property and are known to us, so by Theorem \ref{d-1_enough} we can find a fair division point $z\in\{x_1, \ldots, x_{d-1}\}$ for them. 
    But we have $A'_{id} = F_d\subset  A_{id}$ and $A'_{ij} \subset A^{\delta}_{ij}$ because $c_{ij} / S_i < c_{ij} \le a_{ij} + \delta$, so $z$ is also a $\delta$-fair (hence $\eps$-fair) division point for the initial problem.      
\end{proof}

\begin{Th}
\label{rent}
    Suppose that each of the sets $A_{ij}$ is the intersection of the simplex~$A$ and a closed half-space with boundary parallel to~$F_j$ such that $F_j\subset A_{ij}$. Then we can find an $\eps$-fair division point in minimal mode using at most $(d-1)\ceil{\log_{\frac{d}{d-1}} n}$ queries, where $n = \ceil{1/\eps}$.
\end{Th}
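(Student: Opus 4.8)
The plan is to follow the template of the proof of Theorem~\ref{pie}. Since $F_j\subset A_{ij}$, each preference set has the form $A_{ij}=\{[\al_1,\dots,\al_d]\in A:\al_j\le a_{ij}\}$ with $a_{ij}\ge 0$ (take $a_{ij}=1$ when $A_{ij}=A$), and the covering hypothesis guarantees that every minimal-mode query at a point of $A$ has an answer. For each agent $i\in\{1,\dots,d-1\}$ I would spend $k:=\ceil{\log_{d/(d-1)}n}$ queries to construct a point $x_i\in A$ together with \emph{known} numbers $c_{i1},\dots,c_{id}$ such that $\al_j(x_i)\le c_{ij}$ and $c_{ij}\le a_{ij}+\eps$ for every $j$. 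Then the sets $A'_{ij}:=\{[\al_1,\dots,\al_d]\in A:\al_j\le c_{ij}\}$ are known, satisfy the inclusion property (for a fixed $j$ they are nested), contain $x_i$ in $A'_{i1}\cap\dots\cap A'_{id}$, and lie inside $A^\eps_{ij}$. Applying Theorem~\ref{d-1_enough} to the collection $\{A'_{ij}\}_{i\le d-1}$, completed by the sets $A_{dj}$ (which cover $A$ by hypothesis), then yields --- via Lemma~\ref{d_orderings}, from the known numbers $c_{ij}$ alone and hence with no further queries --- an index $i_0$ for which some permutation $\sigma$ satisfies $x_{i_0}\in A'_{j\sigma(j)}$ for $j\le d-1$ and $x_{i_0}\in A_{d\sigma(d)}$; only the \emph{existence} of $\sigma$ is needed here, not its value. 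Since $A'_{j\sigma(j)}\subseteq A^\eps_{j\sigma(j)}$ and $A_{d\sigma(d)}\subseteq A^\eps_{d\sigma(d)}$, the point $x_{i_0}$ is an $\eps$-fair division point, which we output.

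The heart of the argument is the per-agent construction, in which we refine, query by query, a common family of lower bounds for all the thresholds $a_{i1},\dots,a_{id}$. Fix $i$ and maintain known lower bounds $\ell_1,\dots,\ell_d$ with $\ell_j\le a_{ij}$, starting from $\ell_j=0$. At each step put $t:=1-\sum_j\ell_j$ and query agent $i$ at the point $[\ell_1+t/d,\dots,\ell_d+t/d]$; it lies in $A$ because $t>0$ and the coordinates sum to $1$. The returned index $j^\ast$ certifies $\ell_{j^\ast}+t/d=\al_{j^\ast}\le a_{ij^\ast}$, so we raise $\ell_{j^\ast}$ by $t/d$. A one-line induction gives $\sum_j\ell_j=1-\big(\tfrac{d-1}{d}\big)^m$ after $m$ steps, so the deficit $1-\sum_j\ell_j$ is multiplied by $\tfrac{d-1}{d}$ at every step; after $k$ steps it equals $\delta_0:=\big(\tfrac{d-1}{d}\big)^k\le 1/n\le\eps$ by the choice of $k$. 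Putting $c_{ij}:=\ell_j+\delta_0/d$ and $x_i:=[c_{i1},\dots,c_{id}]$, the coordinates sum to $1$ (so $x_i\in A$), we have $\al_j(x_i)=c_{ij}$, and $c_{ij}-a_{ij}\le\delta_0/d\le\eps$, as required in the first paragraph.

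The remaining verifications are routine. For the inclusion $A'_{ij}\subseteq A^\eps_{ij}$: if $x\in A'_{ij}$ with $\al_j(x)>a_{ij}$, write $x=\al_j(x)v_j+(1-\al_j(x))p$ with $p\in F_j$; then $a_{ij}v_j+(1-a_{ij})p\in A_{ij}$ is at distance $(\al_j(x)-a_{ij})\,\|v_j-p\|\le c_{ij}-a_{ij}\le\eps$ from $x$, because $v_j,p\in A$ and $A$ has diameter $1$. Running the per-agent construction for $i=1,\dots,d-1$ uses $(d-1)k=(d-1)\ceil{\log_{d/(d-1)}n}$ queries in total, and the final application of Theorem~\ref{d-1_enough} costs nothing, which gives the stated bound. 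I expect the genuinely delicate point to be the choice of query point: raising every current lower bound by the single common amount $t/d$ is exactly what forces the deficit to contract by the ratio $\tfrac{d-1}{d}$ per query, hence what produces the base-$\tfrac{d}{d-1}$ logarithm; a less symmetric choice of query point would either waste queries or fail to guarantee this uniform progress.
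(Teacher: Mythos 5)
Your proposal is correct and is essentially the paper's own proof: your query point $[\ell_1+t/d,\dots,\ell_d+t/d]$ is exactly the center of the nested regular sub-simplex $\{\al\in A:\al_j\ge\ell_j \text{ for all } j\}$ into which the paper's argument recurses (each answer shrinking the deficit by the factor $\tfrac{d-1}{d}$), and the endgame --- defining the known sets $A'_{ij}$ and invoking Theorem~\ref{d-1_enough} --- is the same. The only differences are presentational (coordinates versus nested simplices) and your slightly more explicit verification that the final point lies in $I^{\eps}_i$.
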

\begin{proof} We have LPS case (ii). 
    We fix an index $i_0\in\{1,\ldots,d-1\}$ and ask which of $A_{i_01}$, \dots, $A_{i_0d}$ contains the center~$c_0$ of~$A$. 
    Some $A_{i_0j_0}$ contains~$c_0$, so we draw the hyperplane~$H_{j_0}$ parallel to $F_{j_0}$ through~$c_0$, and $H_{j_0}\cap A$ also lies in $A_{i_0j_0}$. 
    Clearly, this reduces the problem to the smaller regular simplex~$A'$ that $H_{j_0}$ cuts off from~$A$.
    (Indeed, if we denote by $A'_{ij}$ the intersection~$A'\cap A_{ij}$ and
    $F'_j$ is the facet of~$A'$ that is parallel to~$F_j$, then $A'_{ij}$ is the intersection of~$A'$ and a closed half-space with boundary parallel to~$F'_j$ such that $F'_j\subset A'_{ij}$. This condition copies the assumption of the theorem.)
    This new simplex is homothetic to the initial one with coefficient $\frac{d-1}{d}$. 
    So, after $\ceil{\log_{\frac{d}{d-1}} n}$ queries the resulting regular simplex~$A^\dagger$ has edges of length at most~$\eps$. 
    Then $I^{\eps}_{i_0}$ contains $A^\dagger$, and we take any point of $A^\dagger$ as $x_{i_0}$.
    
    Repeating the described procedure for all indices $i\in\{1,\ldots,d-1\}$ and each time using exactly the specified number of queries, 
    we obtain $d-1$ points lying in the corresponding intersections $I^{\eps}_i$. 
    
    Analogously to the proof of \ref{pie}, for $i \in \{1,\ldots,d-1\}$ and $j \in \{1,\ldots,d\}$ we set
    $$
        A'_{ij} = \{ [\al_1, \ldots, \al_d]\in A : \al_j \le x_{ij} \}, 
        \quad\text{where $x_{ij}$ is the $j$th barycentric coordinate of $x_i$}.
    $$
    These sets satisfy the inclusion property and are known to us, so by Theorem~\ref{d-1_enough} we can find a fair division point $z$ for them. But $x_i \in I^{\eps}_i$ implies that $A'_{ij} \subset A^{\eps}_{ij}$, so $z$ is also an $\eps$-fair division point for the initial problem.
\end{proof}
\begin{Example}
    The following figure illustrates the first two steps of binary search for finding \(x_1 \in I^{\eps}_1\). The blue dashed lines show the boundaries of actual sets $A_{11}$, $A_{12}$, and~$A_{13}$. 
    The point with the question mark is the point about which we ask which of $A_{11}$, $A_{12}$, and~$A_{13}$ contains it.
    At the first step we find out that the point lies in \(A_{13}\), and then cut off a piece of \(A\) according to the answer. Similarly, at the second step we get \(A_{12}\).
    
        \begin{figure}[H]
        \centering
        \begin{minipage}[h]{0.25\linewidth}\centering\begin{tikzpicture}
        [
        scale=1.2,
        >=stealth,
        point/.style = {draw, circle,  fill = black, inner sep = 1pt},
        dot/.style   = {draw, circle,  fill = black, inner sep = .75pt},
        use Hobby shortcut
        ]
        
        \def\xang{0.500}
        \def\yang{0.866}
        \coordinate (ptA) at (0, 0);
        \coordinate (ptB) at (3, 0);
        \coordinate (ptC) at (3*\xang, 3*\yang);
       
        \draw[blue, dashed, thick] (1.7*\xang, 1.7*\yang) -- (3 - 1.7*\xang, 1.7*\yang);
        \draw[blue, dashed, thick] (1.4, 0) -- (1.4*\xang, 1.4*\yang);
        \draw[blue, dashed, thick] (3 - 1.1, 0) -- (3 - 1.1*\xang, 1.1*\yang);
        
        \node (A) at (0,0)                                            [dot]{};
        \node (B) at (+1.000 + 1.000 + 1.000, +0.000 + 0.000 + 0.000) [dot]{};
        \node (C) at (+\xang + \xang + \xang, +\yang + \yang + \yang) [dot]{}; 
        \node (Q) at (1.5, \yang) [dot]{};
        \node (mark) at (1.7, \yang - 0.05) []{\textbf{?}};
        \foreach \from/\to in {A/B, B/C, C/A}
        {
            \draw[line width=0.4mm] (\from) -- (\to);
        }
        \node (F1) at (2, 1.6) [label = right:$F_{1}$]{};
        \node (F2) at (1.5, 0) [label = below:$F_{2}$]{};
        \node (F3) at (1, 1.6) [label = left:$F_{3}$]{};
        \end{tikzpicture}\end{minipage}\(\to\)
        \begin{minipage}[h]{0.25\linewidth}\centering\begin{tikzpicture}
        [
        scale=1.2,
        >=stealth,
        point/.style = {draw, circle,  fill = black, inner sep = 1pt},
        dot/.style   = {draw, circle,  fill = black, inner sep = .75pt},
        use Hobby shortcut
        ]
        
        \def\xang{0.500}
        \def\yang{0.866}
        
        \coordinate (ptA) at (0, 0);
        \coordinate (ptB) at (3, 0);
        \coordinate (ptC) at (3*\xang, 3*\yang);
        
        \draw[blue, dashed, thick] (1.7*\xang, 1.7*\yang) -- (3 - 1.7*\xang, 1.7*\yang);
        \draw[blue, dashed, thick] (1.4, 0) -- (1.4*\xang, 1.4*\yang);
        \draw[blue, dashed, thick] (3 - 1.1, 0) -- (3 - 1.1*\xang, 1.1*\yang);
        \draw[line width=0.4mm]  (1, 0) -- (2, 2*\yang);
        \draw[EDR] (0,0) -- (1, 0) -- (2, 2*\yang) -- (3*\xang, 3*\yang) -- cycle;
        
        \node (A) at (ptA) [dot]{};
        \node (B) at (ptB) [dot]{};
        \node (C) at (ptC) [dot]{};
        \node (Q) at (2, 2*\yang / 3) [dot]{};
        \node (mark) at (2.1, 2*\yang / 3 + 0.2) []{\textbf{?}};
        \foreach \from/\to in {A/B, B/C, C/A}
        {
            \draw[line width=0.4mm] (\from) -- (\to);
        }
        \node (F1) at (2, 1.6) [label = right:$F_{1}$]{};
        \node (F2) at (1.5, 0) [label = below:$F_{2}$]{};
        \node (F3) at (1, 1.6) [label = left:$F_{3}$]{};
        \end{tikzpicture}\end{minipage}\(\to\)
        \begin{minipage}[h]{0.25\linewidth}\centering\begin{tikzpicture}
        [
        scale=1.2,
        >=stealth,
        point/.style = {draw, circle,  fill = black, inner sep = 1pt},
        dot/.style   = {draw, circle,  fill = black, inner sep = .75pt},
        use Hobby shortcut
        ]
        
        \def\xang{0.500}
        \def\yang{0.866}
        \coordinate (ptA) at (0, 0);
        \coordinate (ptB) at (3, 0);
        \coordinate (ptC) at (3*\xang, 3*\yang);

        \draw[blue, dashed, thick] (1.7*\xang, 1.7*\yang) -- (3 - 1.7*\xang, 1.7*\yang);
        \draw[blue, dashed, thick] (1.4, 0) -- (1.4*\xang, 1.4*\yang);
        \draw[blue, dashed, thick] (3 - 1.1, 0) -- (3 - 1.1*\xang, 1.1*\yang);
        
        
        \draw[line width=0.4mm]  (3 - 2*\xang/3, 2*\yang/3) -- (1 + 2*\xang/3, 2*\yang / 3) -- (2, 2*\yang);
        \draw[EDR]  (3 - 2*\xang/3, 2*\yang/3) -- (1 + 2*\xang/3, 2*\yang / 3) -- (2, 2*\yang) -- (3*\xang, 3*\yang) -- (0, 0) -- (3, 0) -- cycle;
        \node (A) at (ptA) [dot]{};
        \node (B) at (ptB) [dot]{};
        \node (C) at (ptC) [dot]{};
        
        \node (Q) at (2, 2*\yang / 3 + 4*\yang / 9) [dot]{};
        \node (mark) at (2.1, 2*\yang / 3 + 4*\yang / 9 + 0.2) []{\textbf{?}};
        \foreach \from/\to in {A/B, B/C, C/A}
        {
            \draw[line width=0.4mm] (\from) -- (\to);
        }
        \node (F1) at (2, 1.6) [label = right:$F_{1}$]{};
        \node (F2) at (1.5, 0) [label = below:$F_{2}$]{};
        \node (F3) at (1, 1.6) [label = left:$F_{3}$]{};
        \end{tikzpicture}\end{minipage}
    \end{figure}
\end{Example}

\begin{Corollary}
\label{rent-binary}
    Suppose that each of the sets $A_{ij}$ is the intersection of the simplex~$A$ and a closed half-space with boundary parallel to~$F_j$ such that $F_j\subset A_{ij}$. Then we can find an $\eps$-fair division point in binary mode using at most $(d-1)^2\ceil{\log_{\frac{d}{d-1}} n}$ queries, where $n = \ceil{1/\eps}$.
\end{Corollary}
\begin{proof} 
The statement follows from Theorem~\ref{rent} because a minimal mode query can be simulated by $d-1$ binary mode queries.
(For a fixed index $i_0\in\{1,\ldots,d-1\}$ and a point $x\in A$, in binary mode we successively ask which of $A_{i_01}$, \dots, $A_{i_0(d-1)}$ contains~$x$. If none of them does, then $A_{i_0d}$ contains~$x$.)
\end{proof}

\begin{Remark}
\label{rem:pie-min}
For LPS case~(ii), the above results give estimates for both binary and minimal modes (Theorems~\ref{rent-binary-direct} and~\ref{rent} and Corollary~\ref{rent-binary}), while for LPS case~(i) only a binary mode estimate is given (Theorem~\ref{pie}).
For LPS case~(i) in minimal mode, one can apply a modification of the method described for this mode in the proof of Theorem~\ref{rent}.
We believe that the corresponding algorithm (for LPS case~(i) in minimal mode) will be more complicated, because instead of subsimplices it will involve more complex subsets of the simplex~$A$ (for example, at the first step of the algorithm we move not to a subsimplex, but to the closure of the complement of a subsimplex of~$A$).
As a convenient coordinate system, here we can use the auxiliary simplex $A^+$ of diameter~$2$, the centers of the facets of which are the vertices of~$A$.
Then the subspaces of interest to us are intersections of~$A$ with subsimplices in $A^+$.
We conjecture that with this method we can find an $\eps$-fair division point 
(in minimal mode for LPS case~(i)) 
using at most ${(d-1)}\ceil{\log_{\frac{d}{d-1}} k}$ queries, where $k = \ceil{2/\eps}$.
For the case of small $d$, the validity of this estimate is easy to verify.
We leave a detailed analysis of this algorithm for further research, along with other possible generalizations and variations.
\end{Remark}


\bigskip

\section{Case of convex preference sets}

In this section we deal with the case where the preference sets $A_{ij}$ in the rental harmony problem are convex.
We use the standard notion of convexity in Euclidean spaces: a subset is convex if and only if given any two points in the subset, the subset contains the whole line segment that joins them, or, equivalently, if the subset is the intersection of Euclidean half-spaces.

If $d = 2$ then convex sets containing a fixed vertex of the simplex are intersections of the simplex with half-spaces containing this vertex, which provides a trivial solution with $\ceil{\log_2 n}$ queries (this may be viewed as a trivial case of Theorem~$\ref{rent}$).
Here, we focus on the situation with $d = 3$.

First, we prove an analogue of Theorem~\ref{d-1_enough} for the case where our sets are convex (while do not necessarily have the inclusion property) and we know all $d$ points in intersections (instead of $d-1$).

\begin{Lm}
\label{separate_players}
    Suppose all preference sets $A_{ij}$ are convex and $F_j \subset A_{ij}$ for all $i$ and~$j$. 
    Then there exists an algorithm that, given $\eps>0$ and a collection $x_1, \ldots, x_d$ with $x_i \in I^{\eps}_i$ for each~$i$, finds an $\eps$-fair division point $x = x(x_1, \ldots, x_d)$.
\end{Lm}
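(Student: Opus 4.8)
My plan is to reduce the lemma to a purely combinatorial‑geometric claim about the points $x_1,\dots,x_d$ alone and then settle that claim by an assignment‑market (competitive‑equilibrium) argument; no further queries will be used, since the output $x$ will be an explicit function of $x_1,\dots,x_d$.

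\emph{Reduction.} For all $i,j$ the set $A^\eps_{ij}$ is convex (it is the $\eps$‑neighbourhood of the convex set $A_{ij}$), it contains $F_j$ (since $F_j\subset A_{ij}\subset A^\eps_{ij}$), and it contains $x_i$ (since $x_i\in I^\eps_i\subset A^\eps_{ij}$); hence $\conv{\{x_i\}\cup F_j}\subset A^\eps_{ij}$. So it suffices to prove the following statement and to make it algorithmic: for any $x_1,\dots,x_d\in A$ there are a permutation $\sigma$ of $(1,\dots,d)$ and a point $x\in A$ with $x\in\conv{\{x_i\}\cup F_{\sigma(i)}}$ for every $i$. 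Such an $x$ then lies in every $A^\eps_{i\sigma(i)}$, i.e.\ is an $\eps$‑fair division point.

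\emph{The claim.} The key tool is the barycentric description, valid when $x_i$ lies in the interior of~$A$,
\[
\conv{\{x_i\}\cup F_j}=\Bigl\{\,[\al_1,\dots,\al_d]\in A:\ \tfrac{\al_j}{(x_i)_j}\le\tfrac{\al_k}{(x_i)_k}\ \text{for all }k\,\Bigr\},
\]
so that $x\in\conv{\{x_i\}\cup F_j}$ means precisely that, at the ``price vector'' $x$, room $j$ is cheapest for agent $i$ relative to the weights $x_i$. Thus the claim is exactly the existence of a Walrasian equilibrium of the assignment market with $d$ agents, $d$ rooms and multiplicative valuations $(x_i)_j$, and one can produce it concretely: take $\sigma$ maximizing $\prod_i(x_i)_{\sigma(i)}$ (a maximum‑weight perfect matching) and look for potentials $\nu_1,\dots,\nu_d$ with $\nu_{\sigma(i)}-\nu_j\le\log\frac{(x_i)_{\sigma(i)}}{(x_i)_j}$ for all $i,j$. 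Maximality of $\sigma$ rules out negative cycles in this system of difference constraints (a negative cycle would yield a matching with larger product), so a solution $\nu$ exists by the Bellman--Ford/shortest‑path criterion, and then $x:=[e^{\nu_1},\dots,e^{\nu_d}]$, rescaled to lie on $A$, satisfies $x\in\conv{\{x_i\}\cup F_{\sigma(i)}}$ for every $i$. Algorithmically one computes a maximum‑weight matching $\sigma$ (inspecting all $d!$ permutations, or via the Hungarian algorithm) and then $\nu$ by shortest paths; equivalently one runs over all $d!$ permutations and for each tests feasibility of the linear system $\bigcap_i\conv{\{x_i\}\cup F_{\sigma(i)}}\ne\emptyset$, outputting a point of the first nonempty intersection.

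\emph{Main obstacle.} The crux is the claim itself — recognizing $\conv{\{x_i\}\cup F_j}$ as the demand region of a linear assignment market and exhibiting the equilibrium price vector; the reduction and the extraction of the algorithm are routine. A minor secondary point is the degenerate case in which some $x_i$ lies on $\partial A$, where the logarithms above are undefined: there one observes that the set of tuples $(x_1,\dots,x_d)$ for which the claim holds is closed in $A^d$ and contains the dense set of interior tuples, hence equals $A^d$, and for the algorithm the direct linear‑feasibility formulation above needs no change.
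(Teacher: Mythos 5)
Your proposal is correct, and its first half (replacing each $A^{\eps}_{ij}$ by the cone $\conv{\{x_i\}\cup F_j}$, which is contained in $A^{\eps}_{ij}$ because $A^{\eps}_{ij}$ is convex and contains both $F_j$ and $x_i$) is exactly the reduction the paper makes with its sets $A'_{ij}=\conv{F_j,x_i}$. Where you diverge is the existence step: the paper simply observes that, after a fixed-point free relabelling, $\{A'_{i1},\dots,A'_{id}\}$ is a KKM covering for each $i$ and invokes the rainbow KKM lemma (Gale's theorem) to get a permutation $\sigma$ with $\bigcap_i A'_{i\sigma(i)}\neq\emptyset$, extracting the algorithm by brute-force search over the $d!$ permutations. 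You instead prove this existence directly for these particular cone-shaped sets: your barycentric description of $\conv{\{x_i\}\cup F_j}$ as the region where room $j$ is ``cheapest'' for agent $i$ is correct (and easy to verify: a point $\al$ lies in the cone iff $\al_j/(x_i)_j\le\al_k/(x_i)_k$ for all $k$, with $t=\al_j/(x_i)_j$ the coefficient of $x_i$), the no-negative-cycle argument from maximality of $\prod_i(x_i)_{\sigma(i)}$ is the standard complementary-slackness proof for the assignment LP and is sound, and the limiting argument for boundary $x_i$ closes the degenerate case. In effect you have reproved the special case of Gale's rainbow KKM lemma that is actually needed here, which makes the lemma self-contained and yields a polynomial-time ($O(d^3)$ via the Hungarian algorithm plus shortest paths) rather than $d!$-time selection of $\sigma$; the paper's route is shorter but leans on the rainbow KKM lemma as a black box and only claims the brute-force algorithm. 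Both are valid, and since the query count of Theorem \ref{convex} does not depend on this post-processing, the difference is one of exposition and of internal computation cost only.
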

\begin{proof}
    First of all, observe that $A^{\eps}_{ij}$ are also convex (because 
the $\eps$\nobreakdash-neighborhood of a set is the Minkowski sum of this set and the $\eps$\nobreakdash-neighborhood of the origin,    
    the $\eps$\nobreakdash-neighborhood of a point is convex in our metric, 
    and the Minkowski sum of convex sets is convex). 
    Put $A'_{ij} = \conv{F_j, x_i}$. Observe that each $A'_{ij}$ depends only on $x_i$ and $j$, but not on $A_{ij}$. 
    Let $\tau$ be any cyclic permutation of $(1,\ldots,d)$, and set $v'_k=v_{\tau(k)}$ for all $k\in (1,\ldots,d)$.
    It can be easily seen that, for each~$i$, the collection $\{A'_{i1}, \ldots, A'_{id}\}$ is a KKM-covering for $A$ with respect to the vertex labelling $\{v'_1,\dots,v'_d\}$ (see Sec.~\ref{Sec: Stating the problem}).
    Then the rainbow KKM lemma implies that there exists a point $z$ and a permutation~$\sigma$ of $(1,\ldots,d)$ such that $z \in A'_{i\sigma(i)}$ for all~$i$. 
    Moreover, it is clear that there exists an algorithm finding~$z$: for example, by checking all permutations of $(1,\ldots,d)$ we can find a permutation $\sigma$ such that
    $$ 
        \bigcap_{1 \le i \le d} A'_{i\sigma(i)} \neq \emptyset
    $$
    and take any point in this intersection.
    But $F_j \subset A^{\eps}_{ij}$ and $x_i \in I^{\eps}_i \subset A^{\eps}_{ij}$ imply that $A'_{ij} \subset A^{\eps}_{ij}$ (since $A'_{ij} = \conv{F_j, x_i}$ and $A^{\eps}_{ij}$ is convex). 
    Thus $z$ is also an $\eps$-fair division point for the initial problem. 
\end{proof}

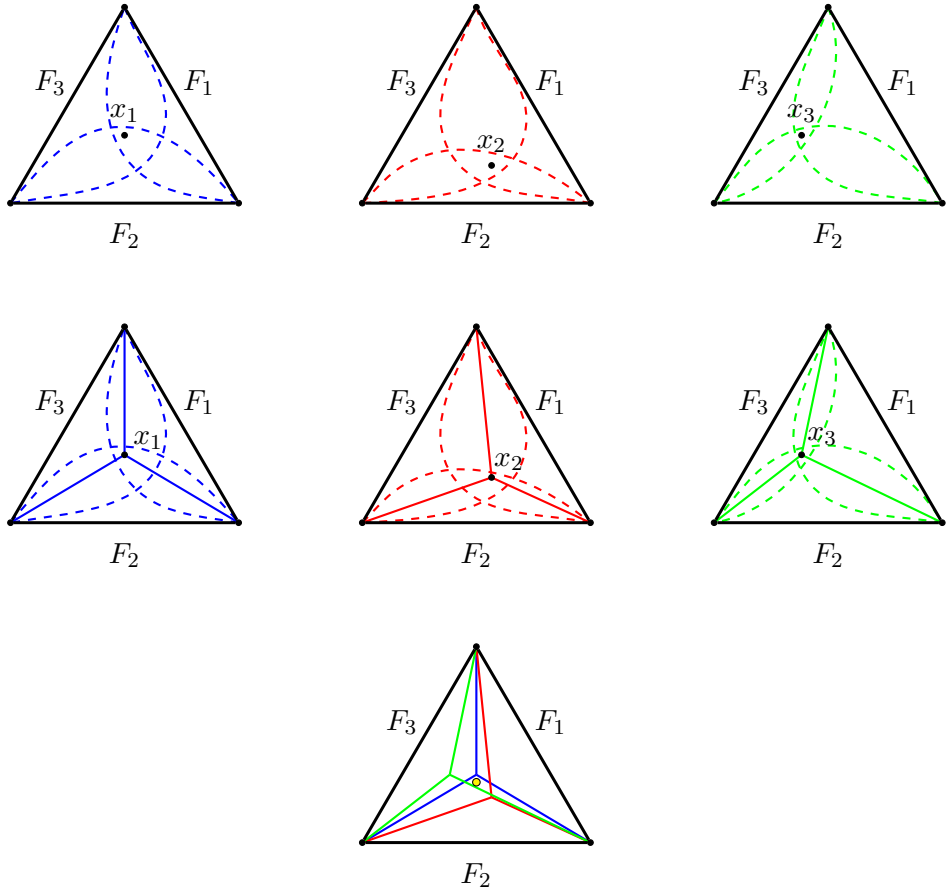
\begin{figure}[h!]
    \centering       
     \begin{minipage}[h]{0.3\linewidth}\centering\begin{tikzpicture}
        [
        scale=1,
        >=stealth,
        point/.style = {draw, circle,  fill = black, inner sep = 1pt},
        dot/.style   = {draw, circle,  fill = black, inner sep = .75pt},
        use Hobby shortcut
        ]
        
        \def\xang{0.500}
        \def\yang{0.866}
        \coordinate (ptA) at (0, 0);
        \coordinate (ptB) at (3, 0);
        \coordinate (ptC) at (3*\xang, 3*\yang);
       
        \draw[blue, thick, dashed] ([out angle=50]ptA)  .. (1.3, 1)   .. ([in angle=130]ptB);
        \draw[blue, thick, dashed] ([out angle=170]ptB) .. (1.7, 0.4) .. ([in angle=250]ptC);
        \draw[blue, thick, dashed] ([out angle=290]ptC) .. (2, 0.9)   .. ([in angle=10]ptA);
        
        \node (A) at (0,0)                                            [dot]{};
        \node (B) at (+1.000 + 1.000 + 1.000, +0.000 + 0.000 + 0.000) [dot]{};
        \node (C) at (+\xang + \xang + \xang, +\yang + \yang + \yang) [dot]{}; 
        \node (x_1) at (1.5, 0.9) [dot, label=$x_1$]{};
        \foreach \from/\to in {A/B, B/C, C/A}
        {
            \draw[line width=0.4mm] (\from) -- (\to);
        }
        \node (F1) at (2, 1.6) [label = right:$F_{1}$]{};
        \node (F2) at (1.5, 0) [label = below:$F_{2}$]{};
        \node (F3) at (1, 1.6) [label = left:$F_{3}$]{};
        \end{tikzpicture}\end{minipage}
        \begin{minipage}[h]{0.3\linewidth}\centering\begin{tikzpicture}
        [
        scale=1,
        >=stealth,
        point/.style = {draw, circle,  fill = black, inner sep = 1pt},
        dot/.style   = {draw, circle,  fill = black, inner sep = .75pt},
        use Hobby shortcut
        ]
        
        \def\xang{0.500}
        \def\yang{0.866}
        
        \coordinate (ptA) at (0, 0);
        \coordinate (ptB) at (3, 0);
        \coordinate (ptC) at (3*\xang, 3*\yang);
        
        \draw[red, thick, dashed] ([out angle=45]ptA)  .. (0.4, 0.4) .. ([in angle=140]ptB);
        \draw[red, thick, dashed] ([out angle=168]ptB) .. (1.2, 0.6) .. ([in angle=246]ptC);
        \draw[red, thick, dashed] ([out angle=285]ptC) .. (2.15, 1)  .. ([in angle=3]ptA);
        \node (A) at (ptA) [dot]{};
        \node (B) at (ptB) [dot]{};
        \node (C) at (ptC) [dot]{}; 
        \node (x_2) at (1.7, 0.5) [dot, label=$x_2$]{};
        \foreach \from/\to in {A/B, B/C, C/A}
        {
            \draw[line width=0.4mm] (\from) -- (\to);
        }
        \node (F1) at (2, 1.6) [label = right:$F_{1}$]{};
        \node (F2) at (1.5, 0) [label = below:$F_{2}$]{};
        \node (F3) at (1, 1.6) [label = left:$F_{3}$]{};
        \end{tikzpicture}\end{minipage}
        \begin{minipage}[h]{0.3\linewidth}\centering\begin{tikzpicture}
        [
        scale=1,
        >=stealth,
        point/.style = {draw, circle,  fill = black, inner sep = 1pt},
        dot/.style   = {draw, circle,  fill = black, inner sep = .75pt},
        use Hobby shortcut
        ]
        
        \def\xang{0.500}
        \def\yang{0.866}
        \coordinate (ptA) at (0, 0);
        \coordinate (ptB) at (3, 0);
        \coordinate (ptC) at (3*\xang, 3*\yang);
        
        \draw[green, thick, dashed] ([out angle=50] ptA) .. (0.9, 0.9) .. ([in angle=120]ptB);
        \draw[green, thick, dashed] ([out angle=170]ptB) .. (1.2, 0.6) .. ([in angle=250]ptC);
        \draw[green, thick, dashed] ([out angle=290]ptC) .. (1.2, 0.8) .. ([in angle=10]ptA);
        
        \node (A) at (ptA) [dot]{};
        \node (B) at (ptB) [dot]{};
        \node (C) at (ptC) [dot]{};
        
        \node (x_3) at (1.15, 0.9) [dot, label=$x_3$]{};
        
        \foreach \from/\to in {A/B, B/C, C/A}
        {
            \draw[line width=0.4mm] (\from) -- (\to);
        } 
        \node (F1) at (2, 1.6) [label = right:$F_{1}$]{};
        \node (F2) at (1.5, 0) [label = below:$F_{2}$]{};
        \node (F3) at (1, 1.6) [label = left:$F_{3}$]{};
        \end{tikzpicture}\end{minipage}
        
        \vspace{24pt}
        
        \begin{minipage}[h]{0.3\linewidth}\centering\begin{tikzpicture}
        [
        scale=1,
        >=stealth,
        point/.style = {draw, circle,  fill = black, inner sep = 1pt},
        dot/.style   = {draw, circle,  fill = black, inner sep = .75pt},
        use Hobby shortcut
        ]
        
        \def\xang{0.500}
        \def\yang{0.866}
        \coordinate (ptA) at (0, 0);
        \coordinate (ptB) at (3, 0);
        \coordinate (ptC) at (3*\xang, 3*\yang);
        \coordinate (x1c) at (1.5, 0.9);

        \draw[blue, thick, dashed] ([out angle=50]ptA)  .. (1.3, 1)   .. ([in angle=130]ptB);
        \draw[blue, thick, dashed] ([out angle=170]ptB) .. (1.7, 0.4) .. ([in angle=250]ptC);
        \draw[blue, thick, dashed] ([out angle=290]ptC) .. (2, 0.9)   .. ([in angle=10]ptA);
        \draw[blue, thick] (ptA) -- (x1c);
        \draw[blue, thick] (ptB) -- (x1c);
        \draw[blue, thick] (ptC) -- (x1c);
        
        \node (A) at (0,0)                                            [dot]{};
        \node (B) at (+1.000 + 1.000 + 1.000, +0.000 + 0.000 + 0.000) [dot]{};
        \node (C) at (+\xang + \xang + \xang, +\yang + \yang + \yang) [dot]{}; 
        \node (x_1) at (x1c) [dot, label={[label distance=-0.1cm]70:$x_1$}]{};
        
        \foreach \from/\to in {A/B, B/C, C/A}
        {
            \draw[line width=0.4mm] (\from) -- (\to);
        }
        \node (F1) at (2, 1.6) [label = right:$F_{1}$]{};
        \node (F2) at (1.5, 0) [label = below:$F_{2}$]{};
        \node (F3) at (1, 1.6) [label = left:$F_{3}$]{};
        \end{tikzpicture}\end{minipage}
         \begin{minipage}[h]{0.3\linewidth}\centering\begin{tikzpicture}
        [
        scale=1,
        >=stealth,
        point/.style = {draw, circle,  fill = black, inner sep = 1pt},
        dot/.style   = {draw, circle,  fill = black, inner sep = .75pt},
        use Hobby shortcut
        ]
        
        \def\xang{0.500}
        \def\yang{0.866}
        
        \coordinate (ptA) at (0, 0);
        \coordinate (ptB) at (3, 0);
        \coordinate (ptC) at (3*\xang, 3*\yang);
        \coordinate (x2c) at (1.7, 0.6);
        
        \draw[red, thick, dashed] ([out angle=45]ptA)  .. (0.4, 0.4) .. ([in angle=140]ptB);
        \draw[red, thick, dashed] ([out angle=168]ptB) .. (1.2, 0.6) .. ([in angle=246]ptC);
        \draw[red, thick, dashed] ([out angle=285]ptC) .. (2.15, 1)  .. ([in angle=3]ptA);
        \draw[red, thick] (ptA) -- (x2c);
        \draw[red, thick] (ptB) -- (x2c);
        \draw[red, thick] (ptC) -- (x2c);
        \node (A) at (ptA) [dot]{};
        \node (B) at (ptB) [dot]{};
        \node (C) at (ptC) [dot]{}; 
        \node (x_2) at (x2c) [dot, label={[label distance=-0.15cm]380:$x_2$}]{};
        
        \foreach \from/\to in {A/B, B/C, C/A}
        {
            \draw[line width=0.4mm] (\from) -- (\to);
        }
        \node (F1) at (2, 1.6) [label = right:$F_{1}$]{};
        \node (F2) at (1.5, 0) [label = below:$F_{2}$]{};
        \node (F3) at (1, 1.6) [label = left:$F_{3}$]{};
        \end{tikzpicture}\end{minipage}
        \begin{minipage}[h]{0.3\linewidth}\centering\begin{tikzpicture}
        [
        scale=1,
        >=stealth,
        point/.style = {draw, circle,  fill = black, inner sep = 1pt},
        dot/.style   = {draw, circle,  fill = black, inner sep = .75pt},
        use Hobby shortcut
        ]
        
        \def\xang{0.500}
        \def\yang{0.866}
        \coordinate (ptA) at (0, 0);
        \coordinate (ptB) at (3, 0);
        \coordinate (ptC) at (3*\xang, 3*\yang);
        \coordinate (x3c) at (1.15, 0.9);

        \draw[green, thick, dashed] ([out angle=50] ptA) .. (0.9, 0.9) .. ([in angle=120]ptB);
        \draw[green, thick, dashed] ([out angle=170]ptB) .. (1.2, 0.6) .. ([in angle=250]ptC);
        \draw[green, thick, dashed] ([out angle=290]ptC) .. (1.2, 0.8) .. ([in angle=10]ptA);
        \draw[green, thick] (ptA) -- (x3c);
        \draw[green, thick] (ptB) -- (x3c);
        \draw[green, thick] (ptC) -- (x3c);
        
        \node (A) at (ptA) [dot]{};
        \node (B) at (ptB) [dot]{};
        \node (C) at (ptC) [dot]{};
        
        \node (x_3)  at (x3c) [dot, label={[label distance=-0.1cm]3:$x_3$}]{};
        
        \foreach \from/\to in {A/B, B/C, C/A}
        {
            \draw[line width=0.4mm] (\from) -- (\to);
        } 
        \node (F1) at (2, 1.6) [label = right:$F_{1}$]{};
        \node (F2) at (1.5, 0) [label = below:$F_{2}$]{};
        \node (F3) at (1, 1.6) [label = left:$F_{3}$]{};
        \end{tikzpicture}\end{minipage}
        
        \vspace{24pt}
            \begin{minipage}[h]{0.3\linewidth}\centering\begin{tikzpicture}
        [
        scale=1,
        >=stealth,
        point/.style = {draw, circle,  fill = black, inner sep = 1pt},
        dot/.style   = {draw, circle,  fill = black, inner sep = .75pt},
        use Hobby shortcut
        ]
        
        \def\xang{0.500}
        \def\yang{0.866}
        \coordinate (ptA) at (0, 0);
        \coordinate (ptB) at (3, 0);
        \coordinate (ptC) at (3*\xang, 3*\yang);
        \coordinate (x1c) at (1.5, 0.9);
        \coordinate (x2c) at (1.7, 0.6);
        \coordinate (x3c) at (1.15, 0.9);

        \draw[blue, thick]  (ptA) -- (x1c);
        \draw[blue, thick]  (ptB) -- (x1c);
        \draw[blue, thick]  (ptC) -- (x1c);
        \draw[red, thick]   (ptA) -- (x2c);
        \draw[red, thick]   (ptB) -- (x2c);
        \draw[red, thick]   (ptC) -- (x2c);
        \draw[green, thick] (ptA) -- (x3c);
        \draw[green, thick] (ptB) -- (x3c);
        \draw[green, thick] (ptC) -- (x3c);
        
        \node (A) at (ptA) [dot]{};
        \node (B) at (ptB) [dot]{};
        \node (C) at (ptC) [dot]{};
        \foreach \from/\to in {A/B, B/C, C/A}
        {
            \draw[line width=0.4mm] (\from) -- (\to);
        }
        
        \node (F1) at (2, 1.6) [label = right:$F_{1}$]{};
        \node (F2) at (1.5, 0) [label = below:$F_{2}$]{};
        \node (F3) at (1, 1.6) [label = left:$F_{3}$]{};
        \node (X) at (1.5, 0.8) [draw, circle,  fill = yellow, inner sep = 1pt]{};
        \end{tikzpicture}\end{minipage}

    \caption{An example of new covering for Lemma~\ref{separate_players}.}
    \label{fig:exampleforlemma}
\end{figure}

\begin{Th}
\label{convex}
    Suppose that $d = 3$ and each of the sets $A_{ij}$ is a convex set containing~$F_j$. 
    Then we can find an $\eps$-fair division point in binary mode with at most $6(\ceil{\log_2 n}^2 + \ceil{\log_2 n}) = O(\log^2 n)$ queries, where $n = \ceil{1/\eps}$. 
    
\end{Th}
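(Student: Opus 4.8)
The plan is to run Lemma~\ref{separate_players} as the final step, so that it suffices to produce, for each agent $i\in\{1,2,3\}$ \emph{separately}, a point $x_i\in I^\eps_i=A^\eps_{i1}\cap A^\eps_{i2}\cap A^\eps_{i3}$ using at most $2\bigl(\ceil{\log_2 n}^2+\ceil{\log_2 n}\bigr)$ binary-mode queries concerning that agent; feeding $x_1,x_2,x_3$ into the (query-free) algorithm of Lemma~\ref{separate_players} then yields the $\eps$-fair division point, and summing over the three agents gives the claimed bound. So fix $i$ and abbreviate $A_1,A_2,A_3:=A_{i1},A_{i2},A_{i3}$; these are convex, cover $A$, and $F_j\subset A_j$. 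Set up affine coordinates $(u,t)$ on the plane of $A$ with $t=\al_3$, so that $L_t:=\{\al_3=t\}\cap A$ is the segment parallel to $F_3$ at height $t$ (with $L_0=F_3$, $L_1=\{v_3\}$) and $u$ is an affine parameter along it. Since $A_1$ is convex and contains $F_1$, which is an end of every $L_t$, we have $A_1\cap L_t=[\,\text{left end of }L_t,\ U_1(t)\,]$ for a \emph{concave} function $U_1$; symmetrically $A_2\cap L_t=[\,U_2(t),\ \text{right end of }L_t\,]$ for a \emph{convex} function $U_2$. Locating $U_1(t)$ (resp. $U_2(t)$) is a monotone search along $L_t$, hence costs $\ceil{\log_2 n}+O(1)$ queries to $A_1$ (resp. $A_2$).

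The structural fact I would isolate is the following. Let $h:=\inf\{t: U_2(t)\le U_1(t)\}$, the least height at which $A_1\cup A_2$ covers $L_t$, equivalently the least height attained by the (compact convex) set $A_1\cap A_2$. For $t<h$ the segment $L_t$ has a genuine gap $\bigl(U_1(t),U_2(t)\bigr)$ missed by $A_1\cup A_2$, and since $A_1\cup A_2\cup A_3=A$ this gap lies in $A_3$. Moreover $g(t):=U_2(t)-U_1(t)$ is a \emph{convex} function on $[0,h)$ that decreases to $0$ as $t\uparrow h$; comparing $g$ with its chord through $(h,0)$ gives $g(h-\delta)\le g(0)\,\delta/h\le \delta/h$, i.e.\ the gap closes \emph{at least linearly}. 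This linear rate is exactly what keeps the search logarithmic.

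The algorithm for agent $i$ is then a two-level binary search. The outer search bisects on $t$; each iteration calls one inner search against $A_1$ and one against $A_2$ to determine $U_1(t)$ and $U_2(t)$ to additive precision $\approx\eps/4$, and uses the comparison of $U_1(t)$ and $U_2(t)$ to move $t$ up (gap still large) or down (already past $h$), or to stop. If the search reveals that $h$ is within $\eps$ of $0$ or of $1$, I return a point of $A_1\cap A_2$ of height within $\eps$ of $F_3$ (resp.\ of $v_3$): such a point lies in $A_1\cap A_2$ and within $\eps$ of $A_3$, since $F_3\subset A_3$ and $v_3\in A_3$ (the latter because $v_3$ is a limit of gap points). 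Otherwise the outer search homes in on a height $t^\dagger<h$ with $h-t^\dagger$ small enough that, by the linear-rate estimate, $g(t^\dagger)\le\eps/2$; two more inner searches then locate the (now tiny) gap on $L_{t^\dagger}$, and I return a point $x_i$ whose $u$-coordinate lies inside it. Then $x_i\in A_3$ (the gap is in $A_3$), while $\dist(x_i,A_1)$ and $\dist(x_i,A_2)$ are at most $g(t^\dagger)$ plus the inner-search error, so $x_i\in I^\eps_i$. A careful accounting of the number of outer iterations, of the two $(\ceil{\log_2 n}+O(1))$-query inner searches per iteration, and of the final gap extraction yields the per-agent bound $2\bigl(\ceil{\log_2 n}^2+\ceil{\log_2 n}\bigr)$, hence $6\bigl(\ceil{\log_2 n}^2+\ceil{\log_2 n}\bigr)=O(\log^2 n)$ overall.

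The main obstacle, I expect, is not the asymptotic order but the bookkeeping needed to pin down the constant while keeping the argument rigorous: one must choose the several precision parameters (how finely to localize $h$, how finely to localize the endpoints of $A_1\cap L_t$ and $A_2\cap L_t$, and how to act when the comparison $U_1(t)\lessgtr U_2(t)$ falls within tolerance) so that the returned point is provably in $I^\eps_i$ and no outer iteration is wasted. The convexity of $U_2$ and concavity of $U_1$ — giving the at-least-linear closing of the gap — is precisely what prevents the accuracy $\eps$ from forcing $\Omega(1/\eps)$ rather than $O(\log(1/\eps))$ work at the outer level, and this is the key point to get right. The degenerate configurations ($h=0$, $h=1$, or $A_1\cap A_2$ lower-dimensional) should be folded into the "$h$ near $0$ or $1$" branch together with the stopping rule of the outer search.
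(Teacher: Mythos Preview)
Your reduction via Lemma~\ref{separate_players} and the two-level binary-search architecture match the paper's. The per-agent search, however, is organized quite differently. The paper works on a discrete $n$-grid and slices by the \emph{first} barycentric coordinate~$a$; two inner binary searches compute $k_2(a)$ and $k_3(a)$, and the outer search locates a grid level $a_0$ that is ``uncovered'' with $a_0+1$ ``covered''. The decisive step---absent from your sketch---is then purely combinatorial: a short path $Z$ of grid points along level $a_0+1$, running from a point of $A_{i2}$ to a point of $A_{i3}$ and lying entirely in $A_{i2}\cup A_{i3}$, must contain adjacent $z_m\in A_{i2}$, $z_{m+1}\in A_{i3}$; each is a grid-neighbour of a gap point at level $a_0$ (hence in $A_{i1}$), so $z_m\in I^\eps_i$. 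No smallness of the gap is ever used, only grid adjacency, and this is what pins the constant to exactly $2\bigl(\ceil{\log_2 n}^2+\ceil{\log_2 n}\bigr)$ per agent.

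Your analytic route is a legitimate alternative for the asymptotic $O(\log^2 n)$, but as written it does not yield the stated constant, and the gap sits precisely where you wave at ``a careful accounting''. The chord estimate $g(h-\delta)\le g(0)\,\delta/h$ means that to force $g(t^\dagger)\le\eps/2$ you must locate $h$ to \emph{relative} precision~$\eps$, i.e.\ absolute precision $\eps h/g(0)$. When $h$ sits just above your special-case cutoff (say $h\approx 2\eps$) while $g(0)$ is of order~$1$---realizable, for instance, with $A_1$ a thin sliver along $F_1$ and $A_2$ bounded by a steep concave arc---this is precision $\sim\eps^2$, hence $\sim 2\ceil{\log_2 n}$ outer iterations and $\sim 4\ceil{\log_2 n}^2$ queries per agent, twice the target. (Your claim that $v_3\in A_3$ ``because $v_3$ is a limit of gap points'' is also off: gap points accumulate at height~$h$, not at~$v_3$, so the $h\approx 1$ branch needs a separate argument.) The paper's discrete path-tracing trick is exactly what lets one stop after $\ceil{\log_2 n}$ outer steps regardless of where $h$ falls.
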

\begin{proof}
    By Lemma~\ref{separate_players} it is sufficient to find a point in each of $I^{\eps}_i$, so we want to do this for an arbitrary fixed~$i$ with at most $2(\ceil{\log_2 n}^2 + \ceil{\log_2 n})$ queries.
    
    Based on the barycentric coordinates notation introduced in Sec.~\ref{Sec: Stating the problem}, for the triples of numbers $(a,b,c)$ with nonzero sum $a+b+c$ we use the notation 
    $$
    [a, b, c]_*= [a/(a+b+c), b/(a+b+c), c/(a+b+c)]
    $$
    and introduce the grid
    $$
        \{ [a, b, c]_* : a, b, c \in \mathbb{N}_0, a + b + c = n \}.
    $$
    Our queries will only be about grid points. We define 
    \begin{gather*}
        k_2(a) = \max\{ b \in \mathbb{N}_0 : 0 \le b \le n - a, [a, b, n - a - b]_* \in A_{i2} \},\\
        k_3(a) = \min\{ b \in \mathbb{N}_0 : 0 \le b \le n - a, [a, b, n - a - b]_* \in A_{i3}\}.
    \end{gather*}
    
    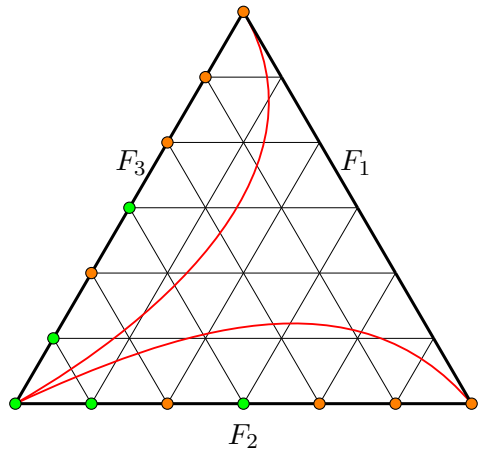
\begin{figure}[H]
        \centering
        
        \begin{tikzpicture}
        [
        scale= 1,
        >=stealth,
        point/.style = {draw, circle,  fill = black, inner sep = 1pt},
        covered/.style = {draw, circle,  fill = green, inner sep = 1.5pt},
        uncovered/.style = {draw, circle,  fill = orange, inner sep = 1.5pt},
        dot/.style   = {draw, circle,  fill = black, inner sep = .5pt},
        use Hobby shortcut
        ]
        
        \def\xang{0.500}
        \def\yang{0.866}

        \coordinate (z)  at (0,0);
        
        \coordinate (z0) at (+1.000, +0.000);
        \coordinate (z1) at (+\xang, +\yang);
        
        \coordinate (z00) at (+1.000 + 1.000, +0.000 + 0.000);
        \coordinate (z10) at (+\xang + 1.000, +\yang + 0.000);
        \coordinate (z11) at (+\xang + \xang, +\yang + \yang);
        
        \coordinate (z000) at (+1.000 + 1.000 + 1.000, +0.000 + 0.000 + 0.000);
        \coordinate (z100) at (+\xang + 1.000 + 1.000, +\yang + 0.000 + 0.000); 
        \coordinate (z110) at (+\xang + \xang + 1.000, +\yang + \yang + 0.000); 
        \coordinate (z111) at (+\xang + \xang + \xang, +\yang + \yang + \yang);
        
        \coordinate (z0000) at (+1.000 + 1.000 + 1.000 + 1.000, +0.000 + 0.000 + 0.000 + 0.000);
        \coordinate (z1000) at (+\xang + 1.000 + 1.000 + 1.000, +0.000 + \yang + 0.000 + 0.000); 
        \coordinate (z1100) at (+\xang + \xang + 1.000 + 1.000, +0.000 + \yang + \yang + 0.000); 
        \coordinate (z1110) at (+\xang + \xang + \xang + 1.000, +\yang + \yang + \yang + 0.000);
        \coordinate (z1111) at (+\xang + \xang + \xang + \xang, +\yang + \yang + \yang + \yang);

        \coordinate (z00000) at (+1.000 + 1.000 + 1.000 + 1.000 + 1.000, +0.000 + 0.000 + 0.000 + 0.000 + 0.000);
        \coordinate (z10000) at (+\xang + 1.000 + 1.000 + 1.000 + 1.000, +0.000 + \yang + 0.000 + 0.000 + 0.000);
        \coordinate (z11000) at (+\xang + \xang + 1.000 + 1.000 + 1.000, +0.000 + \yang + \yang + 0.000 + 0.000); 
        \coordinate (z11100) at (+\xang + \xang + \xang + 1.000 + 1.000, +\yang + \yang + \yang + 0.000 + 0.000);
        \coordinate (z11110) at (+\xang + \xang + \xang + \xang + 1.000, +\yang + \yang + \yang + \yang + 0.000);
        \coordinate (z11111) at (+\xang + \xang + \xang + \xang + \xang, +\yang + \yang + \yang + \yang + \yang);
        
        \coordinate (z000000) at (+1.000 + 1.000 + 1.000 + 1.000 + 1.000 + 1.000, +0.000 + 0.000 + 0.000 + 0.000 + 0.000 + 0.000);
        \coordinate (z100000) at (+\xang + 1.000 + 1.000 + 1.000 + 1.000 + 1.000, +0.000 + \yang + 0.000 + 0.000 + 0.000 + 0.000);
        \coordinate (z110000) at (+\xang + \xang + 1.000 + 1.000 + 1.000 + 1.000, +0.000 + \yang + \yang + 0.000 + 0.000 + 0.000);
        \coordinate (z111000) at (+\xang + \xang + \xang + 1.000 + 1.000 + 1.000, +\yang + \yang + \yang + 0.000 + 0.000 + 0.000);
        \coordinate (z111100) at (+\xang + \xang + \xang + \xang + 1.000 + 1.000, +\yang + \yang + \yang + \yang + 0.000 + 0.000);
        \coordinate (z111110) at (+\xang + \xang + \xang + \xang + \xang + 1.000, +\yang + \yang + \yang + \yang + \yang + 0.000);
        \coordinate (z111111) at (+\xang + \xang + \xang + \xang + \xang + \xang, +\yang + \yang + \yang + \yang + \yang + \yang);
        
        \foreach \from/\to in {
        z1/z10, z10/z100, z100/z1000, z1000/z10000, z10000/z100000,
        z11/z110, z110/z1100, z1100/z11000, z11000/z110000,
        z111/z1110, z1110/z11100, z11100/z111000,
        z1111/z11110, z11110/z111100,
        z11111/z111110,
        z0/z10, z10/z110, z110/z1110, z1110/z11110, z11110/z111110,
        z00/z100, z100/z1100, z1100/z11100, z11100/z111100,
        z000/z1000, z1000/z11000, z11000/z111000,
        z0000/z10000, z10000/z110000,
        z00000/z100000,
        z00000/z10000, z10000/z11000, z11000/z11100, z11100/z11110, z11110/z11111,
        z0000/z1000, z1000/z1100, z1100/z1110, z1110/z1111,
        z000/z100, z100/z110, z110/z111,
        z00/z10, z10/z11,
        z0/z1}
        {
                \draw[line width=0.1mm] (\from) -- (\to);
        }

        \draw[color=red, line width=0.25mm] (0, 0) to [out=25,in=130] (z000000);
        \draw[color=red, line width=0.25mm] (0, 0) to [out=30,in=300] (z111111);
    
        \foreach \from/\to in {
        z/z0, z0/z00, z00/z000, z000/z0000, z0000/z00000, z00000/z000000,
        z/z1, z1/z11, z11/z111, z111/z1111, z1111/z11111, z11111/z111111,
        z000000/z100000, z100000/z110000, z110000/z111000, z111000/z111100, z111100/z111110, z111110/z111111}
        {
            \draw[line width=0.4mm] (\from) -- (\to);
        }
        
        \node (zz) at (z)  [covered]{};
        \node (x1) at (z0) [covered]{};
        \node (y1) at (z1) [covered]{};
        \node (x2) at (z00) [uncovered]{};
        \node (y2) at (z11) [uncovered]{};
        \node (x3) at (z000) [covered]{};
        \node (y3) at (z111) [covered]{};
        \node (x4) at (z0000) [uncovered]{};
        \node (y4) at (z1111) [uncovered]{};
        \node (x5) at (z00000) [uncovered]{};
        \node (y5) at (z11111) [uncovered]{};
        \node (x6) at (z000000) [uncovered]{};
        \node (y6) at (z111111) [uncovered]{};
        
        \node (F1) at (4, 3.2) [label = right:$F_{1}$]{};
        \node (F2) at (3, 0) [label = below:$F_{2}$]{};
        \node (F3) at (2, 3.2) [label = left:$F_{3}$]{};
        \end{tikzpicture}
        
        \caption{Green points correspond to covered values and orange points correspond to uncovered ones.}
        \label{fig:cover-uncover}
    \end{figure}
    
    Observe that $[a, 0, n - a]_* \in A_{i2}$ and $[a, n - a, 0]_* \in A_{i3}$. 
    Then convexity of $A_{i2}$ and $A_{i3}$ implies that the values of $k_2(a)$ and $k_3(a)$ for any fixed $a$ can be found with binary search using $2\ceil{\log_2 n}$ queries.
    
We say that a coordinate $a$ is \emph{covered} if $k_2(a) + 1 \ge k_3(a)$ and \emph{uncovered} otherwise. In other words, $a$ is covered if the union $A_{i2} \cup A_{i3}$ contains all points of the grid with the first coordinate $a$. Clearly, $a = n$ is covered. 
Then we spend $2 \ceil{\log_2 n}$ queries to calculate $k_2(0)$ and $k_3(0)$ and check whether $a = 0$ is covered. 
If it is then $[0, k_2(a), n - k_2(a)]_*$ is the desired point. 
Otherwise we can run a binary search through $a$ to obtain a coordinate $a_0$ such that it is uncovered while $a_0 + 1$ is covered (note that this is possible despite the set of covered values has ``gaps'' in general): each iteration of binary search requires at most $2 \ceil{\log_2 n}$ queries, so the total search takes at most $2 \ceil{\log_2 n}^2$ ones.

After finding $a_0$, we turn to the following sequence~$Z$ of $k=k_3(a_0)-k_2(a_0)+2$ points (see Fig.~\ref{fig:zpath}):
    \begin{equation*} 
    \begin{array}{rlll}
    z_1:= & [a_0,     & k_2(a_0),   & n-a_0-k_2(a_0)]_*, \\
    z_2:= & {[a_0+1,} & k_2(a_0),   & n-a_0-k_2(a_0)-1]_*, \\
    z_3:= & {[a_0+1,} & k_2(a_0)+1, & n-a_0-k_2(a_0)-2]_*, \\
      & ... & \\
    z_{k-2}:= & {[a_0+1,} & k_3(a_0)-2, & n-a_0-k_3(a_0)+1]_*, \\
    z_{k-1}:= & {[a_0+1,} & k_3(a_0)-1, & n-a_0-k_3(a_0)]_*, \\
    z_{k}:= & {[a_0,} & k_3(a_0), & n-a_0-k_3(a_0)]_*.
    \end{array}
    \end{equation*}

    \begin{figure}[H]
        \centering
        
        \begin{tikzpicture}
        [
        scale= 1,
        >=stealth,
        point/.style = {draw, circle,  fill = black, inner sep = 1pt},
        covered/.style = {draw, circle,  fill = green, inner sep = 1.5pt},
        uncovered/.style = {draw, circle,  fill = orange, inner sep = 1.5pt},
        zpoint/.style = {draw, circle, blue, fill = blue, inner sep = 1pt},
        zpoint_big/.style = {draw, circle, blue, fill = blue, inner sep = 2pt},
        dot/.style   = {draw, circle,  fill = black, inner sep = .5pt},
        use Hobby shortcut
        ]
        
        \def\xang{0.500}
        \def\yang{0.866}

        \coordinate (z)  at (0,0);
        
        \coordinate (z0) at (+1.000, +0.000);
        \coordinate (z1) at (+\xang, +\yang);
        
        \coordinate (z00) at (+1.000 + 1.000, +0.000 + 0.000);
        \coordinate (z10) at (+\xang + 1.000, +\yang + 0.000);
        \coordinate (z11) at (+\xang + \xang, +\yang + \yang);
        
        \coordinate (z000) at (+1.000 + 1.000 + 1.000, +0.000 + 0.000 + 0.000);
        \coordinate (z100) at (+\xang + 1.000 + 1.000, +\yang + 0.000 + 0.000); 
        \coordinate (z110) at (+\xang + \xang + 1.000, +\yang + \yang + 0.000); 
        \coordinate (z111) at (+\xang + \xang + \xang, +\yang + \yang + \yang);
        
        \coordinate (z0000) at (+1.000 + 1.000 + 1.000 + 1.000, +0.000 + 0.000 + 0.000 + 0.000);
        \coordinate (z1000) at (+\xang + 1.000 + 1.000 + 1.000, +0.000 + \yang + 0.000 + 0.000); 
        \coordinate (z1100) at (+\xang + \xang + 1.000 + 1.000, +0.000 + \yang + \yang + 0.000); 
        \coordinate (z1110) at (+\xang + \xang + \xang + 1.000, +\yang + \yang + \yang + 0.000);
        \coordinate (z1111) at (+\xang + \xang + \xang + \xang, +\yang + \yang + \yang + \yang);

        \coordinate (z00000) at (+1.000 + 1.000 + 1.000 + 1.000 + 1.000, +0.000 + 0.000 + 0.000 + 0.000 + 0.000);
        \coordinate (z10000) at (+\xang + 1.000 + 1.000 + 1.000 + 1.000, +0.000 + \yang + 0.000 + 0.000 + 0.000);
        \coordinate (z11000) at (+\xang + \xang + 1.000 + 1.000 + 1.000, +0.000 + \yang + \yang + 0.000 + 0.000); 
        \coordinate (z11100) at (+\xang + \xang + \xang + 1.000 + 1.000, +\yang + \yang + \yang + 0.000 + 0.000);
        \coordinate (z11110) at (+\xang + \xang + \xang + \xang + 1.000, +\yang + \yang + \yang + \yang + 0.000);
        \coordinate (z11111) at (+\xang + \xang + \xang + \xang + \xang, +\yang + \yang + \yang + \yang + \yang);
        
        \coordinate (z000000) at (+1.000 + 1.000 + 1.000 + 1.000 + 1.000 + 1.000, +0.000 + 0.000 + 0.000 + 0.000 + 0.000 + 0.000);
        \coordinate (z100000) at (+\xang + 1.000 + 1.000 + 1.000 + 1.000 + 1.000, +0.000 + \yang + 0.000 + 0.000 + 0.000 + 0.000);
        \coordinate (z110000) at (+\xang + \xang + 1.000 + 1.000 + 1.000 + 1.000, +0.000 + \yang + \yang + 0.000 + 0.000 + 0.000);
        \coordinate (z111000) at (+\xang + \xang + \xang + 1.000 + 1.000 + 1.000, +\yang + \yang + \yang + 0.000 + 0.000 + 0.000);
        \coordinate (z111100) at (+\xang + \xang + \xang + \xang + 1.000 + 1.000, +\yang + \yang + \yang + \yang + 0.000 + 0.000);
        \coordinate (z111110) at (+\xang + \xang + \xang + \xang + \xang + 1.000, +\yang + \yang + \yang + \yang + \yang + 0.000);
        \coordinate (z111111) at (+\xang + \xang + \xang + \xang + \xang + \xang, +\yang + \yang + \yang + \yang + \yang + \yang);
        
        \foreach \from/\to in {
        z1/z10, z10/z100, z100/z1000, z1000/z10000, z10000/z100000,
        z11/z110, z110/z1100, z1100/z11000, z11000/z110000,
        z111/z1110, z1110/z11100, z11100/z111000,
        z1111/z11110, z11110/z111100,
        z11111/z111110,
        z0/z10, z10/z110, z110/z1110, z1110/z11110, z11110/z111110,
        z00/z100, z100/z1100, z1100/z11100, z11100/z111100,
        z000/z1000, z1000/z11000, z11000/z111000,
        z0000/z10000, z10000/z110000,
        z00000/z100000,
        z00000/z10000, z10000/z11000, z11000/z11100, z11100/z11110, z11110/z11111,
        z0000/z1000, z1000/z1100, z1100/z1110, z1110/z1111,
        z000/z100, z100/z110, z110/z111,
        z00/z10, z10/z11,
        z0/z1}
        {
                \draw[line width=0.1mm] (\from) -- (\to);
        }
        
        \foreach \from/\to in {
        z/z0, z0/z00, z00/z000, z000/z0000, z0000/z00000, z00000/z000000,
        z/z1, z1/z11, z11/z111, z111/z1111, z1111/z11111, z11111/z111111,
        z000000/z100000, z100000/z110000, z110000/z111000, z111000/z111100, z111100/z111110, z111110/z111111}
        {
            \draw[line width=0.4mm] (\from) -- (\to);
        }
        
        \draw[red, thick] ([out angle=50] z) .. (1.3, 1.3) .. ([in angle=120] z000000);
        \draw[red, thick] ([out angle=290] z111111) .. (3, 1.2) .. ([in angle=10]z);
        
        \node (zp1) at (z10000) [zpoint]{};
        \node (zp2) at (z1000)  [zpoint]{};
        \node (zp3) at (z1100)  [zpoint]{};
        \node (zp4) at (z11100) [zpoint]{};
        \draw[thick, blue, ->] (zp1) -- (zp2);
        \draw[thick, blue, ->] (zp2) -- (zp3);
        \draw[thick, blue, ->] (zp3) -- (zp4);
        \node (zz) at (z)  [covered]{};
        \node (x1) at (z0) [covered]{};
        \node (y1) at (z1) [covered]{};
        \node (x2) at (z00) [covered]{};
        \node (y2) at (z11) [covered]{};
        \node (x3) at (z000) [covered]{};
        \node (y3) at (z111) [covered]{};
        \node (x4) at (z0000) [covered]{};
        \node (y4) at (z1111) [covered]{};
        \node (x5) at (z00000) [uncovered]{};
        \node (y5) at (z11111) [uncovered]{};
        \node (x6) at (z000000) [uncovered]{};
        \node (y6) at (z111111) [uncovered]{};
        
        \node (F1) at (4, 3.2) [label = right:$F_{1}$]{};
        \node (F2) at (3, 0) [label = below:$F_{2}$]{};
        \node (F3) at (2, 3.2) [label = left:$F_{3}$]{};
        \end{tikzpicture}
        
        \caption{The sequence $Z$ is highlighted in blue.}
        \label{fig:zpath}
    \end{figure}
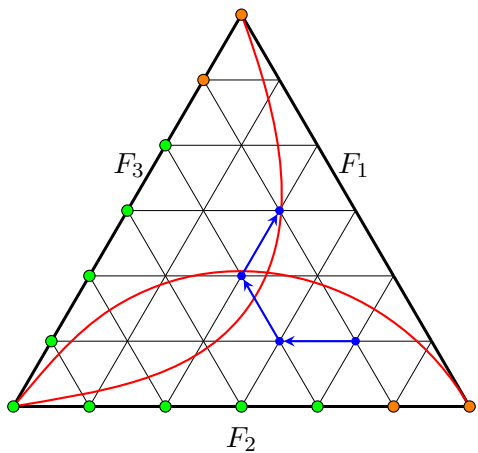
    
Observe that by construction we have $z_1\in A_{i2}$ and $z_k\in A_{i3}$ while the union $A_{i2}\cup A_{i3}$ contains~$Z$. 
This implies that for some $m$ we have $z_m\in A_{i2}$ and $z_{m+1}\in A_{i3}$. 
We can easily find such an $m$ with $z_m\in A_{i2}$ and $z_{m+1}\in A_{i3}$ since we know~$k_2(a_0+1)$ and~$k_3(a_0+1)$.

We claim that $z_m$ and $z_{m+1}$ are in~$I^{\eps}_i$.
Indeed, let $X$ denote the set of points of our grid lying at the level $a_0$ between $z_1$ and $z_k$ (thus, $X$ consists of $k-3$ points).
Observe that $X$ is contained in~$A_{i1}$ because definitions of $z_1$ and $k_2(a_0)$ imply that $X$ does not intersect $A_{i2}$, and definitions of $z_k$ and $k_3(a_0)$ imply that $X$ does not intersect~$A_{i3}$.
Observe also that each pair of consecutive points in~$Z$ is a pair of vertices of a triangle of our grid with its third vertex lying in~$X$. 
Thus, we have $z_m\in A_{i2}$ and $z_{m+1}\in A_{i3}$, while $\{z_m,z_{m+1}\}$ is a pair of vertices of a triangle of our grid with its third vertex (say, $x$) lying in~$X\subset A_{i1}$.
This implies that $z_m$, $z_{m+1}$, and~$x$ are in~$I^{\eps}_i$.
\end{proof}

\begin{Remark}
It would be natural, in conjunction with the ``rental harmony case'' of Theorem~\ref{convex}, to study the ``cake-cutting case'' where each preference set is a convex set containing a vertex of the triangle.
An obstacle to the direct transfer of methods from the proof of Theorem~\ref{convex} to the case with convex preference sets containing vertices is, in particular, a difference in the geometric combinatorics of sets of these types.
For example, if a convex set $A_1$ contains an edge of a triangle, and a convex set $A_2$ contains another edge of this triangle, then the set $A_2\setminus A_1$ is connected.
On the contrary, if a convex set~$B_1$ contains a vertex of a triangle, and a convex set~$B_2$ contains another vertex of this triangle, then the set $B_2\setminus B_1$ can have arbitrarily large number of components.
\end{Remark}


\medskip 

\noindent {\bf Acknowledgements}. We wish to thank Florian Frick, Fr\'ed\'eric Meunier, and Herv\'e Moulin for helpful discussions and comments. 
We also thank the referee for many useful suggestions that significantly improved the previous version of this paper and, in particular, allowed us to shorten some of the proofs.





\end{document}